\documentclass[11pt,reqno]{amsproc}
\usepackage[top=0.8in, bottom=0.8in, left=0.8in, right=0.8in]{geometry}
\usepackage{hyperref}
\usepackage{mathrsfs}
\usepackage{amsmath, amsfonts, mathtools, amsthm}
\usepackage{color}
\mathtoolsset{showonlyrefs}

\newtheorem{theorem}{Theorem}[section]
\newtheorem{lemma}[theorem]{Lemma}
\newtheorem{proposition}[theorem]{Proposition}

\newtheorem{definition}[theorem]{Definition}

\newtheorem{remark}[theorem]{Remark}


\setlength{\marginparwidth}{1.8cm}

\newcommand{\R}{\mathbb{R}}

\newcommand{\Z}{\mathbb{Z}}
\newcommand{\N}{\mathbb{N}}
\newcommand{\T}{\mathbb{T}}
\newcommand{\de}{\mbox{ d}}
\newcommand{\p}{\partial}
\newcommand{\boldb}{\mathbf{B}}
\newcommand{\dive}{\mathrm{div} \,}
\newcommand{\LL}{\mathbb{L}}

\begin{document}

\title{Magnetic Relaxation of a Voigt--MHD System}

\author{Peter Constantin and Federico Pasqualotto}

\begin{abstract}
    We construct solutions of the magnetohydrostatic (MHS) equations in bounded domains and on the torus in three spatial dimensions, as infinite time limits of Voigt approximations of viscous, non-resistive incompressible magnetohydrodynamics equations. The Voigt approximations modify the time evolution without introducing artificial viscosity. We show that the obtained MHS solutions are regular, nontrivial, and are not Beltrami fields.
\end{abstract}

\maketitle

\section{Introduction}
We consider incompressible {magnetohydrodynamics} equations (MHD)~\cite{alfven} describing the coupled evolution of a velocity vector field $u(x,t)$ and of a magnetic field $B(x,t)$. A vector field $B(x)$ is a solution of ideal {magnetohydrostatic} equations (MHS), if it is a time independent solution of the ideal MHD system with vanishing velocity, $u\equiv0$. Such a vector field is called an {ideal MHS equilibrium}. 

The construction of ideal MHS equilibria is of great importance in connection to the design of {nuclear fusion} devices such as tokamaks and stellarators. The classical variational approach of obtaining MHS equilibria ~\cite{kruskalkulsrud} was  extended and amplified ~\cite{kraushudson}  to include favorable features of the solutions. MHS solutions obtained by the variational approach are not smooth. Solutions with nontrivial but discontinuous locally constant pressure were constructed in ~\cite{brunolaurence} using a KAM argument which pieces together Beltrami fields across current sheets. This type of solution can be obtained in complex geometrical configurations ~\cite{enciso2021}.  Continuous stepped pressure profiles have been obtained numerically ~\cite{kraushudson} by  variational methods.

Explicit axisymmetric solutions to ideal MHS equations with discontinuous pressure are classical ~\cite{hill} and  smooth compactly supported axisymmetric ideal MHS equilibria have been obtained more recently by hodograph and ODE methods ~\cite{gavrilov} and Grad-Shafranov equations ~\cite{constlavicol}. These solutions lack the physically desired property of a nonnegative pressure. Quasi-symmetric MHS equilibria which are smooth, not axisymmetric and have nontrivial nonnegative plasma pressure are relevant for the design of stellarators. Such solutions have been constructed by deforming smooth axisymmetric Grad-Shafranov solutions  ~\cite{cdg2020}, ~\cite{cdg2021}, but they require an additional small smooth forcing to be sustained.

Different from the variational  approach, the magnetic relaxation approach ~\cite{perspectives}, ~\cite{moffatt1, moffatt2}, 
seeks to obtain MHS solutions as the long time limits of evolutions of well chosen regularized systems. 
Smooth evolutions that preserve topology were devised, in the hope to obtain smooth MHS solutions with prescribed topological properties. 

Weak solutions to MHS have been constructed in~\cite{brenier1} by a long time limit.  The recent analysis of Moffatt's magnetic relaxation equations~\cite{beekie2021} shows that these equations are globally well posed in higher regularity spaces and may lead to regular MHS solutions in some cases.

It is known that smooth time dependent solutions of non-resistive MHD equations conserve the topology of magnetic lines.
Resistive MHD evolution or singularities might be responsible for topology change, leading to magnetic reconnection events ~\cite{taylor1986} which are real physical phenomena, occurring for instance in solar flares. 

Although magnetic reconnection is not well understood mathematically, the fact that the magnetic field $B$ in ideal MHD evolution might grow rapidly is known. Rigorous examples of infinite time growth are classical ~\cite{Yudovich74}. The subject has been widely investigated, see for instance~\cite{biskamp}. 

In this work we construct MHS equilibria as long time limits of certain Voigt regularizations of the MHD equations. Our work provides an algorithmic construction of MHS equilibria (or steady solutions of incompressible 3D Euler equations) as a long time limit starting from arbitrary initial data.

Voigt regularizations have been widely studied in the context of incompressible fluid dynamics~\cite{lariostiti},~\cite{caoetal2004}.  In the viscous case, these models are known in viscoelasticity as Kelvin--Voigt fluids, ~\cite{oskolkov73}. Due to their favorable regularity properties, the Voigt-regularized equations have been successfully used to construct statistical solutions ~\cite{ramostiti} and ~\cite{levantramostiti}, where suitable convergence results to statistical solutions were proved as the regularization parameter tends to zero.
Regularity properties of Voigt regularized models were obtained  in ~\cite{lariostiti}), and in magnetohydrodynamic contexts  in ~\cite{linshitztiti2007} and~\cite{lariostiti2014}).

The incompressible, viscous, resistive MHD equations in three space dimensions with periodic boundary conditions for the velocity $u$ and magnetic field $B$,  $u(x,t), B(x,t):  \T^3 \times [0,T] \to \R^3$, are 
\noeqref{eq:MHDintro2} \noeqref{eq:MHDintro3}
\begin{align}
&\p_t u + u \cdot \nabla u + \nabla p = B \cdot \nabla B  + \nu \Delta u,\label{eq:MHDintro1} \tag{MHD-1}\\
&\p_t B + u \cdot \nabla B - B \cdot \nabla u = \mu \Delta B,\label{eq:MHDintro2}\tag{MHD-2}\\
&\dive u =0, \qquad \dive B = 0,\label{eq:MHDintro3}\tag{MHD-3}\\
&(u,B)|_{t = 0} = (u_0, B_0).\label{eq:MHDintro4} \tag{MHD-4}
\end{align}
Here, $\nu \geq 0$ is the kinematic viscosity of the fluid, $\mu \geq 0$ is the magnetic resistivity, and $p = p(x,t)$ is the hydrodynamical pressure.

A time independent vector field $B: \T^3 \to \R^3$ is an ideal magnetohydrostatic (MHS)  equilibrium if it satisfies the system~\eqref{eq:MHDintro1}--\eqref{eq:MHDintro4} with $u \equiv 0$ in the case $\mu = \lambda = 0$, that is, if it satisfies the equations
\begin{align}\label{eq:steady3deuler1}\tag{MHS-1}
&B \cdot \nabla B - \nabla p = 0,\\
&\dive B = 0.\label{eq:steady3deuler2}\tag{MHS-2}
\end{align}
with a sufficiently regular pressure function $p$.

The Voigt-regularized MHD system we consider is
\noeqref{eq:voigtMHDintro2}\noeqref{eq:voigtMHDintro3}
\begin{align}
&\p_t \LL u + u \cdot \nabla u + \nabla q = B \cdot \nabla B  + \nu \Delta u,\label{eq:voigtMHDintro1}\tag{VMHD-1}\\
&\p_t \LL B + u \cdot \nabla B = B \cdot \nabla u,\label{eq:voigtMHDintro2}\tag{VMHD-2}\\
&\dive u = 0, \qquad \dive B = 0,\label{eq:voigtMHDintro3}\tag{VMHD-3}\\
&(u, B)|_{t = 0} = (u_0, B_0).\label{eq:voigtMHDintro4}\tag{VMHD-4}
\end{align}

The kinematic viscosity is positive, $\nu > 0$. The magnetic resistivity is set to zero $\mu =0$.
 The regularization is applied to both the velocity field $u$ and the magnetic field $B$.  In the periodic case
 $\LL =  (-\Delta)^\alpha$, with $\alpha > 0$.

The formal a-priori energy inequality for system~\eqref{eq:voigtMHDintro1}--\eqref{eq:voigtMHDintro4} is, for all $t \geq 0$:
\begin{equation}\label{eq:enintro} \tag{EN}
\begin{aligned}
\|u(\cdot, t)\|^2_{{{\dot H}^\alpha}} + \|B(\cdot, t)\|^2_{{{\dot H}^\alpha}} + 2 \nu \int_0^t \| \nabla u(\cdot, s)\|^2_{L^2} \, \de s \leq C,
\end{aligned}
\end{equation}
where $C > 0$ is a constant which depends on initial data, and ${{\dot H}^\alpha}$ is the usual homogeneous Sobolev space on $\T^3$, whose definition is recalled in Appendix~\ref{app:setupper}. From~\eqref{eq:enintro}, we note that the regularization on $B$ is inviscid (it does not dissipate energy for $B$), however the presence of the friction term in the momentum equation ensures that the energy of $u$ is dissipated.

There are two main reasons  we use this regularization. First, if $\alpha$ is sufficiently high, the resulting system has global solutions for large initial data, due to the strong control given by the energy inequality~\eqref{eq:enintro}.

Second, the regularization on the induction equation (the equation for $B$) gives additional compactness, which allows us to  pass to the limit in the expression $B \cdot \nabla B$  following from inequality~\eqref{eq:enintro}, due to the uniform boundedness of a high norm of $B$.  The price we have to pay for this very good compactness property is poor control on the topology of the limiting magnetic field.  In particular, this means that the induction equation no longer holds exactly, and therefore, along the evolution, the magnetic field might change its topology.

Our main results are as follows. A preliminary version of these Theorems was contained in the PhD thesis~\cite{pasqualotto2020}.

The first statement concerns the construction of ideal 3D MHS equilibria with periodic boundary conditions (solutions to equation~\eqref{eq:steady3deuler1}--\eqref{eq:steady3deuler2}) as long-time limits of the Voigt regularized system~\eqref{eq:voigtMHDintro1}--\eqref{eq:voigtMHDintro4}.

\begin{theorem}[Construction of MHS equilibria, periodic case] \label{thm:relax1} 
Consider $u_0, B_0 \in D((-\Delta)^{\alpha/2})$ divergence-free and mean-free vector fields, with $\alpha \geq 1$. Let $(u, B)$ the global solution to the initial value problem~\eqref{eq:voigtMHDintro1}--\eqref{eq:voigtMHDintro4} given by Theorem~\ref{thm:global}. Then, there is a sequence $\{t_n\}_{n\in \N}$, $t_n \to \infty$ as $n \to \infty$, and $B_\infty \in D((-\Delta)^{\alpha/2})$, such that 
\begin{enumerate}
    \item  $B(x,t_n)$ tends weakly in $D((-\Delta)^{\alpha/2})$ (strongly in any $D((-\Delta)^{\gamma/2})$ with $\gamma < \alpha$) to $B_\infty(x)$, and
    \item $B_\infty$ is a strong (in $L^2$) solution of the stationary Euler system:
\begin{equation}\label{eq:euler3dthm}
\begin{aligned}
&B_\infty \cdot \nabla B_\infty   - \nabla q_\infty = 0,\\
&\dive B_\infty = 0
\end{aligned}
\end{equation}
for a pressure function $q_\infty \in \dot H^1$. 
\end{enumerate}

In addition, the following modified magnetic helicity,
\begin{equation}
H[B](t):=\int_{\T^3}  B \cdot \LL \Psi \de x
\end{equation}
is constant as a function of time if $u, B$ solve~\eqref{eq:voigtMHDintro1}--\eqref{eq:voigtMHDintro4}. Here, $\Psi$ is a magnetic potential as defined in Lemma \ref{lem:magneticpot}, i.~e.~a vector field in $\Psi \in {L^\infty}(0,\infty; D((-\Delta)^{(\alpha+1)/2}))$ satisfying $\text{div }\Psi = 0$, $\nabla \times \Psi = B$. Therefore, if the initial data $(u_0, B_0)$ are such that $H[B_0](0) \neq 0$, the resulting $B_\infty$ is nontrivial.
\end{theorem}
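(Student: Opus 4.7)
The strategy uses the energy inequality~\eqref{eq:enintro} twice: the dissipation integral forces the velocity to die along a time sequence, while the uniform $\dot H^\alpha$ bound on $B$ provides weak compactness for the extraction of $B_\infty$. Since $\int_0^\infty \|\nabla u(\cdot,s)\|_{L^2}^2\, ds < \infty$, I would pick $t_n \to \infty$ with $\|\nabla u(\cdot, t_n)\|_{L^2} \to 0$; because $u$ takes values in the space of mean-free fields, Poincar\'e then gives $u(\cdot, t_n) \to 0$ in $H^1(\T^3)$. The uniform bound $\|B(\cdot,t)\|_{\dot H^\alpha} \leq C$ lets me pass to a subsequence along which $B(\cdot, t_n) \rightharpoonup B_\infty$ weakly in $D((-\Delta)^{\alpha/2})$, and by Rellich strongly in $D((-\Delta)^{\gamma/2})$ for every $\gamma < \alpha$.

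\medskip

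To extract the MHS equation for $B_\infty$, I would introduce the time-shifted pairs $u_n(x, s) := u(x, s+t_n)$ and $B_n(x, s) := B(x, s+t_n)$, which also solve~\eqref{eq:voigtMHDintro1}--\eqref{eq:voigtMHDintro4}. Integrability of the dissipation gives $u_n \to 0$ strongly in $L^2(0,T; H^1)$ for every $T>0$. From the uniform $L^\infty(0,T; \dot H^\alpha)$ bound on $B_n$ and the induction equation~\eqref{eq:voigtMHDintro2}, one checks that $\partial_s \LL B_n$ is uniformly bounded in a negative-order Sobolev norm; Aubin--Lions then yields strong compactness of $B_n$ in $C([0,T]; D((-\Delta)^{\gamma/2}))$ for every $\gamma < \alpha$, and passing to the limit in~\eqref{eq:voigtMHDintro2} gives $\partial_s \LL B_\infty = 0$, so $B_\infty$ is time-independent. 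Now test~\eqref{eq:voigtMHDintro1} against a time-independent divergence-free $\varphi \in C^\infty(\T^3)$ and integrate on $s\in[0,T]$. The boundary contribution $\langle \LL(u_n(\cdot,T)-u_n(\cdot,0)), \varphi\rangle$, the viscous term, and the convective term $\int_0^T\langle u_n\cdot\nabla u_n, \varphi\rangle\,ds$ all vanish because $u_n \to 0$ in $L^2(0,T; H^1)$. For the Lorentz term one writes $B_n \cdot \nabla B_n = (B_n - B_\infty)\cdot \nabla B_n + B_\infty \cdot \nabla (B_n - B_\infty)$, integrates by parts against $\varphi$, and uses strong $L^2$ convergence of $B_n-B_\infty$ together with the uniform $H^1$ bound on $B_n$ to pass to the limit $T\,\langle B_\infty\cdot\nabla B_\infty, \varphi\rangle$. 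The resulting identity $\langle B_\infty\cdot\nabla B_\infty, \varphi\rangle = 0$ for every solenoidal $\varphi$ yields~\eqref{eq:euler3dthm} by de Rham, with pressure $q_\infty \in \dot H^1$ via elliptic regularity for $\Delta q_\infty = \dive(B_\infty\cdot\nabla B_\infty)$. I expect the technical heart of the proof to lie exactly in this passage to the limit in $B\cdot\nabla B$, which marries the weak-$\dot H^\alpha$ bound with the Aubin--Lions time-compactness.

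\medskip

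For the helicity statement, I would rewrite~\eqref{eq:voigtMHDintro2} as $\LL \partial_t B = \nabla \times (u \times B)$ (using $\dive u = \dive B = 0$). By self-adjointness of $\LL$ and $B = \nabla \times \Psi$,
\begin{equation*}
\frac{d}{dt} H[B] = \int_{\T^3} (\LL \partial_t B) \cdot \Psi\, dx + \int_{\T^3} B \cdot (\LL \partial_t \Psi)\, dx.
\end{equation*}
The first summand equals $\int (u\times B)\cdot(\nabla\times\Psi)\,dx = \int (u\times B)\cdot B\,dx = 0$, while the identity $\LL \partial_t \Psi = P(u\times B)$ (valid in Coulomb gauge, with $P$ the Leray projector) turns the second summand into $\int B\cdot(u\times B)\,dx = 0$. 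Hence $H[B(t)] \equiv H[B_0]$ along the Voigt flow. Finally, the map $B \mapsto \Psi = \nabla\times(-\Delta)^{-1}B$ is bounded $\dot H^\alpha \to \dot H^{\alpha+1}$, so $B(\cdot, t_n) \rightharpoonup B_\infty$ in $D((-\Delta)^{\alpha/2})$ forces $(-\Delta)^{\alpha/2}\Psi(\cdot,t_n) \to (-\Delta)^{\alpha/2}\Psi_\infty$ strongly in $L^2$ by Rellich; writing $H[B(\cdot,t_n)] = \langle (-\Delta)^{\alpha/2} B(\cdot,t_n), (-\Delta)^{\alpha/2}\Psi(\cdot,t_n)\rangle$, the weak--strong pairing gives $H[B(\cdot,t_n)] \to H[B_\infty]$. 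Therefore $H[B_\infty] = H[B_0] \neq 0$, ruling out $B_\infty \equiv 0$.
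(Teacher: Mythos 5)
Your overall architecture differs from the paper's in the key step. The paper's proof hinges on a second energy estimate obtained by differentiating the momentum equation in time and testing against $\partial_t u$, which yields $\Vert \partial_t u(\cdot,t)\Vert_\alpha^2 + c_1\int_0^t\Vert\partial_t\nabla u\Vert_0^2\,ds\leq c_2$ after a Gr\"onwall argument; from this the authors deduce both that $\partial_t u(\cdot,t_n)\to0$ along a dyadic sequence and that $\Vert u(\cdot,t)\Vert_\gamma\to0$ along the \emph{full} limit $t\to\infty$, and then pass to the limit in the momentum equation pointwise in time at $t=t_n$, where every term except $\mathbb{P}(B\cdot\nabla B)$ vanishes in $\dot H^{-1}$. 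You instead avoid any estimate on $\partial_t u$ by time-shifting, invoking Aubin--Lions on the induction equation to get $C([0,T];D((-\Delta)^{\gamma/2}))$ compactness of $B_n$ and time-independence of the limit, and testing the time-integrated momentum equation against a fixed solenoidal $\varphi$. This is a legitimate and arguably more elementary route (it uses only the basic energy inequality), at the price of a weaker intermediate conclusion ($u\to0$ only along a sequence rather than along the full limit). Your helicity computation and the weak--strong pairing showing $H[B(\cdot,t_n)]\to H[B_\infty]$ are correct and in fact spelled out more explicitly than in the paper.

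There is, however, one genuine flaw as written: the boundary contribution $\langle\LL(u_n(\cdot,T)-u_n(\cdot,0)),\varphi\rangle$ does \emph{not} vanish ``because $u_n\to0$ in $L^2(0,T;H^1)$'' --- convergence in $L^2$ in time controls no pointwise trace at $s=0$ or $s=T$. The term at $s=0$ is fine because you chose $t_n$ so that $u(\cdot,t_n)\to0$ in $H^1$, but $u_n(\cdot,T)=u(\cdot,t_n+T)$ is only known to be bounded in $D((-\Delta)^{\alpha/2})$ by the energy inequality; proving that it actually tends to zero is precisely what the paper's $\partial_t u$ estimate accomplishes, and you do not have that estimate. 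The repair is short: the boundary term is bounded by $2(\Vert u_0\Vert_\alpha+\Vert B_0\Vert_\alpha)\Vert\varphi\Vert_\alpha$ uniformly in $n$ and $T$, while your main term is $T\,\langle B_\infty\cdot\nabla B_\infty,\varphi\rangle$; dividing by $T$, letting $n\to\infty$ for fixed $T$ and then $T\to\infty$ gives $\langle B_\infty\cdot\nabla B_\infty,\varphi\rangle=0$. With that one-line fix your argument closes.
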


In the second statement, we are constructing solutions to the MHS equations in bounded domains. We let $\Omega \subset \R^3$ be a bounded domain with smooth boundary $\p \Omega$, and we let $\gamma_0: H^\gamma(\Omega) \to H^{\gamma - \frac 12}(\p \Omega)$ be the trace map, where $\gamma > 0$. We recall that $A:= \mathbb{P}(-\Delta)$ is the Stokes operator on $\Omega$ (where $\mathbb{P}$ is the Leray projector) and, for $\gamma > 0$, we recall the spaces $D(A^{\gamma/2})$ (for the precise setup, we refer to Section~\ref{app:setupbdd}).

We consider the initial-boundary value problem:
\noeqref{eq:MHDVbddintro2}
\begin{align}
&\p_t u + A^{-\alpha}(u \cdot \nabla u - B \cdot \nabla B ) + \nu A^{-\alpha +1} u = 0,  \label{eq:MHDVbddintro1}\tag{VMHD-$\Omega$-1}\\
&\p_t B + A^{-\alpha}(u \cdot \nabla B - B \cdot \nabla u) = 0,\label{eq:MHDVbddintro2}\tag{VMHD-$\Omega$-2}\\
&(u, B)|_{t = 0} = (u_0, B_0). \label{eq:MHDVbddintro3}\tag{VMHD-$\Omega$-3}
\end{align}
Here, $\nu >0$ is a fixed parameter.

\begin{theorem}[Constructions of MHS equilibria, bounded domain case]\label{thm:bdd} Let $\alpha \geq 1$. Let $u_0, B_0 \in D(A^{\alpha/2})$. Under these assumptions, the problem~\eqref{eq:MHDVbddintro1}--\eqref{eq:MHDVbddintro3} admits a global-in-time strong solution $(u, B) \in L^\infty(0,\infty; D(A^{\alpha/2})) \times L^\infty(0,\infty; D(A^{\alpha/2}))$. Moreover, there is a sequence $\{t_n\}_{n\in \N}$, $t_n \to \infty$ as $n \to \infty$, and $B_\infty \in D(A^{\alpha/2})$, such that 
\begin{enumerate}
\item $B(x, t_n)$ tends weakly in $D(A^{\alpha/2})$ (and strongly in any $D(A^{\gamma/2})$ with $\gamma < \alpha$) to $B_\infty(x)$, and
\item $B_\infty$ is a strong solution of the stationary Euler system:
\noeqref{eq:euler3dthmbdd}
\begin{align}\label{eq:euler3dthmbdd}
&\mathbb{P}(B_\infty \cdot \nabla B_\infty )  = 0,\\
&B_\infty \in D(A^{\alpha/2}). \label{eq:bcsteady}
\end{align}
\item $B_{\infty}$ is not a force-free field. If $J_{\infty}\times B_{\infty} = 0$ with $J_{\infty}
=\nabla\times B_{\infty}$ then $B_{\infty} =0$. In particular, $B_{\infty}$ is not a Beltrami field.
\end{enumerate}
If $\alpha = 1$, and if $B_0$ admits a magnetic potential $\Psi_0$ such that
$$
H[B_0, \Psi_0] := \int_\Omega \LL B_0 \cdot \Psi_0 dx \neq 0,
$$
$B_\infty$ is nontrivial. 
\end{theorem}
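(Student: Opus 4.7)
The plan is to apply $A^\alpha$ to each Voigt equation, cancelling the $A^{-\alpha}$ smoothing on the nonlinearities, and then to pair the resulting identities in $L^2$ against $u$ and $B$ respectively. I expect the trilinear convection terms $(u\cdot\nabla u, u)$ and $(u\cdot\nabla B, B)$ to vanish by incompressibility together with the Dirichlet condition encoded in $D(A^{\alpha/2})$ for $\alpha\geq 1$, while the Lorentz coupling $(B\cdot\nabla B, u)$ cancels the stretching term $-(B\cdot\nabla u, B)$. This will yield the clean identity
\[
\frac{d}{dt}\bigl(\|A^{\alpha/2}u\|_{L^2}^2 + \|A^{\alpha/2}B\|_{L^2}^2\bigr) + 2\nu\|A^{1/2}u\|_{L^2}^2 = 0,
\]
from which a uniform $L^\infty_t D(A^{\alpha/2})^2$ bound and an $L^2_t D(A^{1/2})$ bound for $u$ follow; a standard Galerkin--compactness scheme then produces the global strong solution.

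\textbf{Long-time limit and MHS equation.} For a sequence $t_n\to\infty$, I would form shifted solutions $u_n(s):=u(t_n+s)$, $B_n(s):=B(t_n+s)$ on $[0,1]$. Absolute continuity of $\int_0^\infty\|A^{1/2}u\|^2\,ds$ gives $u_n\to 0$ in $L^2(0,1;D(A^{1/2}))$, and interpolation with the uniform $L^\infty_s D(A^{\alpha/2})$-bound upgrades this to $L^2(0,1; D(A^{\sigma/2}))$ convergence for every $\sigma\in[1,\alpha)$. Controlling $\partial_s B_n$ via the induction equation and applying Aubin--Lions, I extract along a subsequence strong convergence $B_n\to B_\infty^*$ in $L^2(0,1; D(A^{\gamma/2}))$ for $\gamma<\alpha$ together with weak-$\ast$ convergence in $L^\infty_s D(A^{\alpha/2})$. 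Passing to the limit in the induction equation with $u_n\to 0$ will force $\partial_s B_\infty^*=0$, so $B_\infty^*=B_\infty(x)$; equicontinuity of $B_n$ in a weaker norm then yields $B(t_n)=B_n(0)\to B_\infty$ strongly in $D(A^{\gamma/2})$. Finally, testing the shifted momentum equation against $\psi(s)\varphi(x)$ with $\psi\in C^\infty_c(0,1)$, $\varphi\in D(A^{\alpha/2})$, the terms involving $u_n$ will vanish in the limit while the Lorentz term passes by weak-strong product convergence, yielding $A^{-\alpha}\mathbb{P}(B_\infty\cdot\nabla B_\infty)=0$. The pressure $q_\infty$ is recovered by Helmholtz decomposition.

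\textbf{Nontriviality for $\alpha=1$.} Here I plan to verify by direct computation that the modified helicity $H[B,\Psi]=\int_\Omega \LL B\cdot\Psi\,dx$ is conserved along the Voigt flow; the $\LL$-weighting is tailored precisely so that, upon differentiating in time and using the equations for $B$ and $\Psi$, the coupling against $u$ integrates to zero. Passing the conservation identity to the limit along $t_n$ will give $H[B_\infty,\Psi_\infty]=H[B_0,\Psi_0]\neq 0$, which rules out $B_\infty\equiv 0$.

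\textbf{The non-Beltrami property --- main obstacle.} The hardest part will be (3). Assuming for contradiction $J_\infty\times B_\infty=0$ with $B_\infty\not\equiv 0$, one has $J_\infty=\lambda B_\infty$ for a scalar $\lambda$ satisfying $B_\infty\cdot\nabla\lambda=0$, and taking the curl yields the elliptic identity
\[
-\Delta B_\infty=\lambda^2 B_\infty+\nabla\lambda\times B_\infty.
\]
The Dirichlet boundary condition $B_\infty|_{\partial\Omega}=0$ inherited from $D(A^{\alpha/2})$ permits extending $B_\infty$ by zero to a neighborhood of $\overline\Omega$, and the extension will vanish on a nonempty open set. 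A unique continuation argument for this elliptic system then forces $B_\infty\equiv 0$, the desired contradiction. The delicate point is that $\lambda=(J_\infty\cdot B_\infty)/|B_\infty|^2$ is a priori only measurable and may blow up on the nodal set $\{B_\infty=0\}$, so the unique continuation step must accommodate coefficients of limited regularity; a Carleman-type estimate adapted to the divergence-free/curl structure, or an approximation-and-limit argument exploiting the global Sobolev regularity of $B_\infty$, will be the technical crux.
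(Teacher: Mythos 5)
Your global existence and relaxation arguments are sound, and the relaxation step in particular takes a genuinely different route from the paper: the paper derives a second energy estimate by multiplying the time-differentiated momentum equation by $\partial_t u_k$, obtaining $\int_0^\infty(\|\partial_t u\|_{L^2}^2+\|\partial_t\nabla u\|_{L^2}^2)\,ds\leq C$, and then extracts a dyadic sequence $t_i$ along which $\partial_t\nabla u(t_i)\to 0$ pointwise in time; you instead average over unit time windows $[t_n,t_n+1]$, use $\int_{t_n}^{t_n+1}\|A^{1/2}u\|^2\,ds\to 0$, and test against $\psi(s)\varphi(x)$ so that the $\partial_s u_n$ term is killed by integrating by parts in time. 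Your version avoids the paper's delicate Step~3 estimate at the cost of only getting time-averaged information, which is still enough for the stated conclusion. The helicity argument for nontriviality when $\alpha=1$ matches the paper's in substance.

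The genuine gap is item (3), the non-Beltrami property. Your plan --- write $J_\infty=\lambda B_\infty$, derive an elliptic system, extend by zero, and invoke unique continuation --- founders exactly where you say it does: $\lambda$ is only defined off the nodal set of $B_\infty$, has no a priori regularity, and may be unbounded, so no off-the-shelf Carleman estimate applies. This is not a technical detail to be patched; unique continuation for Beltrami-type fields with rough proportionality factor is a genuinely hard problem, and you have not proposed a mechanism that resolves it. The paper sidesteps all of this with an elementary virial identity (following Vainshtein): from $(\nabla\times B_\infty)\times B_\infty=0$ one gets $B_\infty\cdot\nabla B_\infty=\tfrac12\nabla|B_\infty|^2$, and taking the divergence yields $\partial_i\partial_j(B_{\infty,i}B_{\infty,j})=\tfrac12\Delta|B_\infty|^2$. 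Multiplying by $|x|^2$ and integrating by parts twice over $\Omega$ --- all boundary terms vanish because $B_\infty\in D(A^{\alpha/2})\subset H_0^1(\Omega)$ --- gives
\begin{equation}
2\int_\Omega|B_\infty|^2\,dx=3\int_\Omega|B_\infty|^2\,dx,
\end{equation}
hence $B_\infty\equiv0$. Note that this argument uses the Dirichlet boundary condition in an essential way (it is false on the torus, where Beltrami fields abound), which is precisely why the non-Beltrami conclusion appears only in the bounded-domain theorem. You should replace your unique-continuation plan with this identity.
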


\begin{remark}
The inclusion~\eqref{eq:bcsteady} encodes boundary conditions.
\end{remark}

\begin{remark}
Replacing, in the Voigt regularization term (both in $u$ and in $B$) the operator $A^\alpha$ by the operator $\tilde A := \exp(A)$, it is possible to construct smooth solutions to MHS with $B_\infty \in D(\exp(A)) \subset \cap_{\alpha > 0} D(A^{\alpha/2})$.
\end{remark}

We finally provide an example of gradient growth in MHD \emph{without imposing any regularization}. The example, in the context of the 3D ideal MHD system, is classical.\footnote{F.P. thanks Tarek Elgindi for pointing out this example.} We defer the proof to Appendix~\ref{app:growth}.

\begin{proposition}\label{prop:example1}
Consider the 3D ideal MHD system~\eqref{eq:MHDintro1}--\eqref{eq:MHDintro4} with $\mu = \nu = 0$ and periodic boundary conditions (on the torus $\T^3$). There exist smooth initial data $(u_0, B_0)$ of arbitrarily small size in $H^m(\T^3)$, $m > 0$ such that the solution to~\eqref{eq:MHDintro1}--\eqref{eq:MHDintro4} with initial data $(u_0, B_0)$ is global and moreover the following inequality holds:
$$
\|\nabla B \|_{L^\infty(\T^3)} \geq C e^t,
$$
for some positive constant $C$ which depends on initial data.
\end{proposition}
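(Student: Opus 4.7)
The strategy combines an explicit Lagrangian-stretching construction with the scaling symmetry of the ideal MHD system.

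First, I would exploit the scaling invariance of~\eqref{eq:MHDintro1}--\eqref{eq:MHDintro4} with $\mu = \nu = 0$: if $(u, B, p)(x, t)$ solves the system on $\T^3$, so does the rescaled triple
$$
(\lambda u, \lambda B, \lambda^{2} p)(x, \lambda t), \qquad \lambda > 0.
$$
Under this transformation the $H^{m}$-norm of the initial data is multiplied by $\lambda$, and exponential growth $\|\nabla B(\cdot, t)\|_{L^{\infty}} \geq C e^{\lambda_{0} t}$ is converted into growth at rate $\lambda \lambda_{0}$. It therefore suffices to produce a family $(u_{0}^{(N)}, B_{0}^{(N)})_{N \in \N}$ of smooth, mean-zero, divergence-free initial data whose associated MHD solution is global and satisfies $\|\nabla B(\cdot, t)\|_{L^{\infty}} \geq C_{N} e^{\lambda_{N} t}$ for all $t \geq 0$ and some $\lambda_{N} > 0$, with the ratio $\|(u_{0}^{(N)}, B_{0}^{(N)})\|_{H^{m}}/\lambda_{N}$ tending to zero as $N \to \infty$. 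Setting $\lambda = 1/\lambda_{N}$ in the scaling then yields initial data of arbitrarily small $H^{m}$-norm whose solution satisfies the rate-$1$ estimate stated in the proposition.

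To construct the base family, I would pick a smooth steady Euler solution $v$ on $\T^{3}$ possessing a hyperbolic periodic orbit with strictly positive Lyapunov exponent $\Lambda > 0$ (for instance a generic ABC flow), and set $u_{0}^{(N)}(x) := v(Nx)$. This is again a smooth, divergence-free, mean-zero stationary Euler solution on $\T^{3}$, with $\|u_{0}^{(N)}\|_{H^{m}} \sim N^{m} \|v\|_{H^{m}}$, and whose Lagrangian flow has Lyapunov exponent $\lambda_{N} = N \Lambda$ (the trajectories of $v(N\cdot)$ are the $N$-time-rescaled trajectories of $v$). For the magnetic initial datum I would choose $B_{0}^{(N)} = \delta_{N} w$, where $w$ is a smooth divergence-free vector field aligned with the unstable Oseledets direction of $v(N \cdot)$ along the hyperbolic orbit, and $\delta_{N} > 0$ is very small. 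In the regime where the Lorentz force is negligible, the induction equation linearizes to $\partial_{t} B + u_{0}^{(N)} \cdot \nabla B = B \cdot \nabla u_{0}^{(N)}$, whose Lagrangian solution $B(\Phi^{t}(y), t) = D\Phi^{t}(y) B_{0}^{(N)}(y)$ yields the lower bound $\|\nabla B(\cdot, t)\|_{L^{\infty}} \geq c \delta_{N} e^{N \Lambda t}$ up to a breakdown time $t^{*}_{N} \sim (N \Lambda)^{-1} \log(\delta_{N}^{-1})$. The ratio $\|(u_{0}^{(N)}, B_{0}^{(N)})\|_{H^{m}}/\lambda_{N} \sim N^{m-1}/\Lambda$ vanishes as $N \to \infty$ for $m < 1$, providing the required family for the scaling step.

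The main obstacle is to propagate the exponential lower bound from the local, linear regime to all times $t \geq 0$ after the final rescaling. The linearization fails when the Lorentz force $B \cdot \nabla B$ becomes comparable to the advective term, which happens at $t^{*}_{N}$; by choosing $\delta_{N}$ to decay super-exponentially in $N$ (for example $\delta_{N} = e^{-N^{3}}$), the rescaled breakdown time $\tilde t^{*}_{N} = t^{*}_{N}/\lambda_{N} \sim \log(\delta_{N}^{-1})/\lambda_{N}^{2}$ tends to infinity. Establishing that the lower bound persists past this breakdown point requires a bootstrap argument combining energy conservation for ideal MHD (which bounds $\|u\|_{L^{2}}^{2} + \|B\|_{L^{2}}^{2}$ uniformly in time), a Gronwall control of the deviation $u(t) - u_{0}^{(N)}$ in a high-regularity norm, and the recurrence of the hyperbolic orbit to guarantee that the Oseledets stretching is visited repeatedly by the perturbed Lagrangian flow. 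Global existence of the smooth solution in the perturbative regime follows from standard ideal MHD well-posedness; rigorously verifying the above ingredients, and checking that the final rescaled estimate $\|\nabla B(\cdot, t)\|_{L^{\infty}} \geq C e^{t}$ holds for all $t \geq 0$, is the technical heart of the proof.
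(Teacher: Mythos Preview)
Your proposal misses the key structural simplification that makes the paper's argument work. The paper does not treat the Lorentz force perturbatively at all: it chooses $u=(u_1(x,y),u_2(x,y),0)$ a steady 2D Euler flow (stream function $\sin(2\pi x)\sin(2\pi y)$) and $B=(0,0,B_3(t,x,y))$. With this ansatz one has $B\cdot\nabla B=B_3\partial_z B\equiv 0$ and $B\cdot\nabla u=B_3\partial_z u\equiv 0$, so the momentum equation reduces exactly to steady 2D Euler and the induction equation reduces exactly to passive scalar transport $\partial_t B_3+u\cdot\nabla B_3=0$. There is no linearization, no breakdown time, and no bootstrap: the exponential gradient growth comes from the hyperbolic stagnation point of the 2D cellular flow acting on the passive scalar, valid for all $t\ge 0$. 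Global smooth existence is immediate because the system is 2D Euler plus transport.

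By contrast, your construction keeps a genuinely three-dimensional coupling. Two concrete problems follow. First, your claim that ``global existence of the smooth solution in the perturbative regime follows from standard ideal MHD well-posedness'' is not available: global regularity for 3D ideal MHD is open, and smallness of $B_0$ alone does not prevent blow-up once $\nabla B$ has grown large. Second, your mechanism for pushing the lower bound past the breakdown time $t^*_N$ is not a proof: once $B\cdot\nabla B$ is order one, energy conservation gives no control on $\|u(t)-u_0^{(N)}\|$ in any norm strong enough to guarantee that the perturbed flow still visits the hyperbolic orbit with the right alignment. Finally, your scaling computation only makes the $H^m$ norm small when $m<1$, whereas the statement asks for arbitrary $m>0$; the paper's decoupled construction avoids this because one can simply multiply both $u_0$ and $B_0$ by a small constant $\varepsilon$ and accept the slower rate $C e^{c\varepsilon t}$ (which still satisfies the stated inequality with a data-dependent constant in the exponent after time rescaling, or, more directly, with $C$ absorbing the rate).
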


\begin{remark}
The subject of gradient growth in hydrodynamics has been extensively studied, especially in the context of the 2D incompressible Euler equations. In this case, the sharp growth rate of gradients is expected to be double exponential, and it has been shown in the case of domains with boundary~\cite{kiselev}.
\end{remark}

\section{Magnetic relaxation with periodic boundary conditions} \label{sec:relaxation}\label{sec:relax}

In this section, we prove Theorem~\ref{thm:relax1}. We defer the standard setup and definitions to Appendix~\ref{app:setupper}. Recall the Voigt regularized system~\eqref{eq:voigtMHDintro1}--\eqref{eq:voigtMHDintro4} (here, $\mathbb{L} = (- \Delta)^\alpha$): 
\noeqref{eq:MHDV3p}
\begin{align}
&\p_t u + \mathbb{L}^{-1}(u \cdot \nabla u - B \cdot \nabla B) = \nu \mathbb{L}^{-1} \Delta u, \label{eq:MHDV1p} \\
&\p_t  B + \mathbb{L}^{-1}(u \cdot \nabla B - B \cdot \nabla u) = 0, \label{eq:MHDV2p} \\
&(u, B)|_{t = 0} = (u_0, B_0). \label{eq:MHDV3p}
\end{align}

\begin{proof}[Proof of Theorem~\ref{thm:relax1}]
The proof proceeds in three {\bf Steps}. In {\bf Step 1}, we show an integrated a-priori control on the time derivative of $u$ (see inequality~\eqref{eq:goal1}). In {\bf Step 2}, we use~\eqref{eq:goal1} to deduce that, along a sequence of times $t_k \to \infty$, we have the required convergence. Finally, in {\bf Step 3}, we show that the modified magnetic helicity is conserved along the evolution.

\vspace{15pt}

{\bf Step 1.} In this step, we prove inequality~\eqref{eq:goal1}. We claim that there exist positive constants $c_1$ and $c_2$ depending on $\|u_0\|_\alpha, \|B_0 \|_{\alpha} $ and on the parameter $\alpha$, such that the following inequality holds true, for any $t \geq 0$:
\begin{equation}\label{eq:goal1}
\Vert \p_t u(\cdot, t)\Vert^2_{\alpha}+c_1\int_{0}^{t} \|  \p_t \nabla u(\cdot, s) \|_0^2 \de s \leq  c_2.
\end{equation}

In order to prove~\eqref{eq:goal1}, we differentiate~\eqref{eq:MHDV1p} by $\partial_t$. We let $\LL := (-\Delta)^\alpha$:
\begin{equation}\label{eq:dtcomm}
\p_t \LL \p_t u + \mathbb{P}(\p_t u \cdot \nabla u + u \cdot \nabla \p_t u) =\mathbb{P}(\p_t (B \cdot \nabla B ) )  + \nu \Delta \p_t u.
\end{equation}
Hence, upon multiplication of equation~\eqref{eq:dtcomm} by $\p_t u$ and integrating on $\T^3 \times [0,t]$, we have
\begin{equation}\label{eq:dtmain}
\begin{aligned}
&\frac {1} 2 \Vert \p_t u(\cdot, t)  \Vert^2_{\alpha} + \int_{0}^t \int_{\T^3} (\p_t u_i \mathbb{P}( \p_t u_j \p_j u_i))(s,x) \de x\de s\\
& + \nu \int_{0}^t \int_{\T^3} \|\p_t \nabla u\|_0^2 \de x\de s -  \int_{0}^t \int_{\T^3} \p_t u_i\p_t \mathbb{P}( \p_j (B_i B_j) )\de x\de s \leq C.
\end{aligned}
\end{equation}
Integrating $\p_j$ by parts once in space, using the divergence-free condition of $u$, plus the Cauchy--Schwarz inequality, the Poincar\'e inequality~\eqref{eq:poinca}, Morrey's inequality ($\|f\|_{L^6(\T^3)} \leq C \|f\|_{{\dot H}^1(\T^3)}$) and H\"older's inequality, we have
\begin{equation}\label{eq:dtone}
\begin{aligned}
&\Big|\int_{0}^t \int_{\T^3} (\p_t u_i \mathbb{P}(\p_t u_j \p_j u_i))(s,x) \de x\de s\Big| \leq \int_{0}^t \int_{\T^3} |\p_t \nabla u| \, |\mathbb{P}(u\p_t u) | \de x\de s \\
&\leq \frac \nu {10} \Vert \p_t \nabla u\Vert^2_{L^2(0,t; L^2)} + C \int_0^t  \Vert \p_t u (\cdot, s) \Vert^2_{L^4(\T^3)} \Vert  u(\cdot, s)\Vert^2_{L^4(\T^3)} \,\de s\\
&\leq \frac \nu {10} \Vert \p_t \nabla u\Vert^2_{L^2(0,t; L^2)} + C \int_0^t  \Vert \p_t u (\cdot, s) \Vert^2_{\alpha} \Vert  u(\cdot, s)\Vert^2_{1} \,\de s.
\end{aligned}
\end{equation}
Here, we also used that $\alpha \geq 1$.

Concerning the quadratic term in $B$ in~\eqref{eq:dtmain}, we integrate by parts in space to obtain
\begin{equation*}
\begin{aligned}
&\Big| \int_{0}^t \int_{\T^3} \p_t u_i\p_t \mathbb{P}( \p_j (B_i B_j) )\de x\de s \Big| \leq C \int_{0}^t \| \p_t \nabla u\|_0 \, \|\mathbb{P}(B \p_t B)\|_0 \de s \\
& \quad \leq C \int_{0}^t \| \p_t \nabla u\|_0 \, \|B \p_t B\|_0 \de s  \leq C \int_{0}^t \| \p_t \nabla u\|_0 \, \Vert B \Vert_{L^6(\T^3)} \|\p_t B\|_{L^3(\T^3)} \de s.
\end{aligned}
\end{equation*}
Now, \eqref{eq:MHDV2p} implies $\p_t B = \LL^{-1}(\nabla \times (u \times B))$.
This gives, together with the a-priori bound $\|B\|_{L^6(\T^3)} \leq C$:
\begin{equation*}
 \int_{0}^t \| \p_t \nabla u\|_0 \, \Vert B \Vert_{L^\infty} \|\p_t B\|_0 \de s \leq \frac{\nu}{10} \Vert \p_t \nabla u\Vert^2_{L^2(0,t; L^2)} +  C \Vert \LL^{-1} \nabla \times (u \times B)\Vert^2_{L^2(0,t; L^3)}.
\end{equation*}
By $L^p$ elliptic estimates for $\LL$, Sobolev embedding, and the Poincar\'e inequality, we have
\begin{equation*}
\Vert \LL^{-1} \nabla \times (u \times B)\Vert^2_{L^2(0,t; L^3)} \leq C \Vert  u \times B\Vert^2_{L^2(0,t; L^3)} \leq C\Vert B \Vert^2_{L^\infty(0,t; L^6)}\Vert u\Vert^2_{L^2(0,t; L^6)}.
\end{equation*}
Hence we obtain, using Sobolev embedding and the a priori bounds $\|B\|_{L^6(\T^3)} \leq C$, $\|u\|_{L^2(0, \infty; \dot H^1)} \leq C$,
\begin{equation}\label{eq:dttwo}
\begin{aligned}
&\Big| \int_{0}^t \int_{\T^3} \p_t u_i\p_t \mathbb{P}(\p_j (B_i B_j)) \de x\de s \Big| \leq \frac \nu {10} \Vert \p_t \nabla u \Vert^2_{L^2(0,t; L^2)} + C\Vert u\Vert^2_{L^2(0,t; {\dot H}^1)}\\
&\leq \frac \nu {10} \Vert \p_t \nabla u \Vert^2_{L^2(0,t; L^2)} + C.
\end{aligned}
\end{equation}
Combining the above estimates~\eqref{eq:dtmain}, \eqref{eq:dtone}, \eqref{eq:dttwo}, we obtain
\begin{equation*}
\begin{aligned}
&\frac {1} 2 \Vert \p_t u(\cdot, t)  \Vert^2_{\alpha} + \frac \nu 2 \int_{0}^t \int_{\T^3} \|\p_t \nabla u\|_0^2 \de x\de s 
\leq C \int_0^t  \Vert \p_t u (\cdot, s) \Vert^2_{\alpha} \Vert  u(\cdot, s)\Vert^2_{1} \,\de s + C
\end{aligned}
\end{equation*}
We then deduce~\eqref{eq:goal1} by an application of Gr\"onwall's inequality, observing that $ \int_0^t \Vert  u(\cdot, s)\Vert^2_{1} \,\de s \leq C$ uniformly in $t$.

\vspace{20pt}

{\bf Step 2. } In this step, we show the desired convergence along a sequence of times $t_n \to \infty$. We first show two claims.

\vspace{10pt}
{\bf Claim 1: } Inequality~\eqref{eq:goal1} implies that there exists a sequence $\{t_n\}_{n \in \N}$, with $t_n \to \infty$, such that $\Vert \p_t u (\cdot, t_n)\Vert^2_\gamma \to 0$ as $n \to \infty$, for $\gamma = 1$ and all $\gamma < \alpha$.

\vspace{10pt}

\noindent {\emph{Proof of Claim 1:}} Using~\eqref{eq:goal1} we obtain, for a dyadic sequence $t_n$ such that $2^n \leq t_n \leq 2^{n+1}$,  $\|\p_t \nabla u(\cdot, t_n) \|_0^2 \leq C/2^n$. Since $\Vert \p_t u(\cdot, t)\Vert^2_{\alpha} \leq C$, we then have by interpolation that  $\Vert \p_t u(\cdot, t_n)\Vert^2_{\gamma} \to 0$ as $n \to \infty$, for all $\gamma < \alpha$. This concludes the proof of {\bf Claim 1}.

\vspace{10pt}
{\bf Claim 2:} Inequality~\eqref{eq:goal1} implies that $\Vert u(\cdot, t) \Vert_\gamma \to 0$ as $t \to \infty$, for $\gamma = 1$ and all $\gamma < \alpha$.

\vspace{10pt}

\noindent {\emph{Proof of Claim 2:}} We know that the following two inequalities are true:
$$\int_{0}^{\infty} \|\nabla u(\cdot, s)\|_0^2 \de s \leq C, \qquad \int_{0}^{\infty} \| \p_t \nabla u(\cdot, s)\|_0^2 \de s\leq C.$$ From the first inequality we deduce that, along a dyadic sequence $r_n$ such that $2^n \leq r_n \leq 2^{n+1}$, $\|\nabla u(\cdot, r_n)\|_0 \to 0$. Furthermore, for all $t \leq r_n$,
$$
\|\nabla u(\cdot, t)\|_0^2 \leq \Vert \nabla u \Vert_{L^2(t, r_n; L^2)} \, \Vert \p_t \nabla u \Vert_{L^2(t, r_n; L^2)}+\|\nabla u(\cdot, r_n)\|_0^2.
$$
We  first take $n \to \infty$ and then $t \to \infty$ to obtain that $\|\nabla u(\cdot, t)\|_0 \to 0$ as $t \to \infty$. By interpolation we then conclude the proof of {\bf Claim 2}.
\vspace{10pt}

We take the limit of the momentum equation along the sequence $t_n \to \infty$ which was defined in {\bf Claim 1}. We let $u_n(x) := u(x, t_n)$, $B_n(x) := B(x, t_n)$. Due to the bounds proved in {\bf Claim 1} and {\bf Claim 2}, 
\begin{equation*}
\|\p_t u(\cdot, t_n)\|_\gamma \to 0, \qquad \|((-\Delta )^\alpha \p_t u)(t_n, \cdot)\|_{\gamma- 2 \alpha } \to 0,
\end{equation*}
for all $\gamma < \alpha$ and $\gamma =1$. Furthermore, since $\alpha \geq 1$, and since $\|u_n \|_{L^4} \to 0$,
$$\| \mathbb{P}(u_n\cdot \nabla u_n)\|_{-1} = \| \mathbb{P}(\dive (u_n \otimes u_n) )\|_{-1} \to 0.$$
The momentum equation~\eqref{eq:MHDV1p} can be equivalently rewritten as:
\begin{equation*}
    \p_t \mathbb{L} u + \mathbb{P}(u \cdot \nabla u - B \cdot \nabla B) = \nu \Delta u.
\end{equation*}
The above estimates, in conjunction with the convergence properties of $B_n$, in particular imply that every term in the above equation converges to zero in $\dot H^{-1}$, except for the quadratic term in $B$. Therefore, $B_\infty$ satisfies the steady equations in the sense of distributions:
$$
\mathbb{P}(B_\infty \cdot \nabla  B_\infty) = 0, \qquad \dive B = 0.
$$
Since $B_\infty \cdot \nabla  B_\infty$ is, in fact, in $L^2$, the equations are satisfied in the strong $L^2$ sense. This in particular implies that $B$ is a strong solution to the steady 3D Euler system~\eqref{eq:euler3dthm}, concluding {\bf Step 2} of the proof.

 \vspace{20pt}
 
{\bf Step 3. } In view of Lemma~\ref{lem:magneticpot}, there exists a magnetic potential $\Psi$ such that, in particular, $\nabla \times \Psi = B$. We compute the time derivative of the modified magnetic helicity:
\begin{equation*}
\begin{aligned}
&\frac{\text{d}}{\text{d}t} H[B](t) =  (\p_t  B,  \LL \Psi)  +  (B, \p_t \LL \Psi) =(\p_t  \LL B,   \Psi)  +  (B, \p_t \LL \Psi)  .
\end{aligned}
\end{equation*}
By Lemma~\ref{lem:magneticpot}, $\Psi$ satisfies the equations $\p_t \LL\Psi = u \times B$ and $\nabla \times \Psi = B$. Since $\LL$ is self-adjoint with respect to the $L^2$ inner product, we have
\begin{equation*}
\begin{aligned}
&\frac{\text{d}}{\text{d}t} H[B](t)  = \int_{\T^3}  \p_t \LL B \cdot \Psi \de x  +\int_{\T^3}  B \cdot \p_t \LL \Psi \de x\\
& \qquad = \int_{\T^3}  (\nabla \times (u \times B) )\cdot \Psi \de x + \int_{\T^3}   B \cdot (u \times B) \de x\\
& \qquad = -\int_{\T^3}  (u \times B) \cdot (\nabla \times \Psi) \de x = 0.
\end{aligned}
\end{equation*}
This concludes the proof of the theorem.
\end{proof}

\section{The case of bounded domains}\label{sec:bdd}

In this section, we show Theorem~\ref{thm:bdd}. We first recall a few facts about the analysis of the Stokes operator on bounded domains.

\subsection{Bounded domains: setup}\label{app:setupbdd}

Let $\Omega \subset \R^3$ be a bounded domain with smooth boundary.  For $m \geq 0$, $m \in \N$, we define the spaces $H^{m}(\Omega), H^{m}_0(\Omega)$ as in~\cite{boyerfabrie} (Definition III.2.1), and we define $H^{m}(\Omega)$ for $m < 0$, $m \in \N$ by duality, as in~\cite{boyerfabrie} (Section III.2.6). 

We consider the usual Laplacian on $\Omega$ with zero Dirichlet boundary conditions, it well known that there exists a complete eigenbasis $\{v_k\}_{k \in \N}$, such that each $v_k \in H^1_0(\Omega)$, with associated eigenvalues $\eta_k$.
The above definitions extend to fractional spaces as follows. Let $\gamma \in \R$, $\gamma \geq 0$. We set, for $p > 1$,
\begin{align}
&\|f\|^2_{H_0^\gamma(\Omega)} := \sum_{k \geq 0} \eta^{\gamma}_k (f, v_k)^2,\\
&\|f\|^2_{H^\gamma(\Omega)} := \|f\|^2_{H_0^\gamma(\Omega)} + \|f\|^2_{L^2(\Omega)},\label{eq:norm2}\\
&\|f\|_{L^p(\Omega)} := \Big(\int_\Omega |f|^p \, dx \Big)^{\frac1p}.
\end{align}
Here, the bracket $(f,g)$ denotes the standard $L^2(\Omega)$ inner product. The spaces with negative index $\gamma < 0$ are defined by duality.

We now look at vector-valued spaces. We recall the following classical definition (see~\cite{boyerfabrie}, IV.3.3):
\begin{definition}
We define the spaces of divergence-free vector fields as follows:
\begin{align}
\mathcal{V} := \{\varphi \in (\mathcal{C}^\infty_0(\Omega))^3, \mbox{div } \varphi = 0\}. 
\end{align}
We moreover define $H$ as the closure of $\mathcal{V}$ in $(L^2(\Omega))^3$, and $V$ as the closure of $\mathcal{V}$ in $(H^1_0(\Omega))^3$.
\end{definition}

\begin{remark}
We have the following characterization
\begin{align}
V = \{v \in (H^1_0(\Omega))^3, \mbox{div } v = 0\}.
\end{align}
\end{remark}
We consider the Stokes operator $A: D(A) \to H$ whose action on $v \in D(A) = V \cap (H^2(\Omega))^3$ is given by
\begin{equation}
Av := \mathbb{P} (-\Delta v).
\end{equation}
Here, the definition of Leray projection is as in \cite{boyerfabrie}, Definition IV.3.6.

We consider an $L^2$ orthonormal basis of eigenfunctions of $A$, $\{w_k\}_{k \in \N} \in D(A)$, along with the associated eigenvalues $\lambda_k$. For $\alpha \in \R$, the fractional powers of $A$ are defined as
\begin{equation}
A^{\alpha} v := \sum_{k\geq 0} \lambda^\alpha_k (v, w_k) w_k
\end{equation}
We denote
\begin{align}
&D(A^{\alpha/2}) := \{v \in H: A^{\alpha/2} v \in H\},\\
&\|v\|^2_{\alpha} := \sum_{k\geq 0} \lambda^\alpha_k (v, w_k)^2. \label{eq:normalpha}
\end{align}

\begin{remark}
The inclusion:
\begin{equation}
D(A^{\alpha/2}) \subset V \cap (H^{\alpha}(\Omega))^3
\end{equation}
holds true.
\end{remark}

We set $\LL := A^\alpha$. We also set $\LL^{-1}$ to be the inverse of the operator $\LL$. $\LL^{-1}$ maps $D(A^{\gamma/2})$ to $D(A^{\frac \gamma 2 + \alpha})$, for all $\gamma \in \R$ with $\gamma \geq -1$. 
We have the following Lemma.
\begin{lemma}\label{lem:elliptic}
For all $v \in D(A^{\gamma/2})$, with $\gamma \geq -1$, and for $\kappa > 0$ the following holds:
\begin{equation}
\begin{aligned}
&\|A^{-\kappa}v\|_{H^{\gamma + 2 \kappa}} \leq C \|v\|_{\gamma}.
\end{aligned}
\end{equation}
\end{lemma}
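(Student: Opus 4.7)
The plan is to reduce the inequality to the spectral identity provided by \eqref{eq:normalpha} and then invoke the embedding stated in the Remark immediately preceding the lemma. Let $\{w_k\}_{k \geq 0}$ denote the $L^2$-orthonormal Stokes eigenbasis with eigenvalues $\lambda_k$, and write $\hat v_k := (v, w_k)$, so that by the definition of the fractional power $A^{-\kappa} v = \sum_{k \geq 0} \lambda_k^{-\kappa}\,\hat v_k\, w_k$. Substituting this into \eqref{eq:normalpha} with exponent $\gamma + 2\kappa$ yields the exact telescoping identity
$$\|A^{-\kappa}v\|_{\gamma + 2\kappa}^2 \;=\; \sum_{k \geq 0} \lambda_k^{\gamma + 2\kappa}\,\lambda_k^{-2\kappa}\,\hat v_k^2 \;=\; \|v\|_\gamma^2,$$
so $A^{-\kappa}$ is a spectral isometry from $D(A^{\gamma/2})$ onto $D(A^{(\gamma+2\kappa)/2})$.

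It then remains to pass from the intrinsic norm $\|\cdot\|_{\gamma + 2\kappa}$ to the Sobolev norm $H^{\gamma + 2\kappa}(\Omega)$. When $\gamma + 2\kappa \geq 0$, the Remark preceding the lemma provides the embedding $D(A^{(\gamma+2\kappa)/2}) \hookrightarrow V \cap (H^{\gamma + 2\kappa}(\Omega))^3$, and composing it with the spectral identity above gives
$$\|A^{-\kappa}v\|_{H^{\gamma + 2\kappa}(\Omega)} \;\leq\; C\,\|A^{-\kappa}v\|_{\gamma + 2\kappa} \;=\; C\,\|v\|_\gamma,$$
which is the desired bound in this range.

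The main obstacle is the regime $-1 \leq \gamma < 0$ with $\kappa$ small enough that $\gamma + 2\kappa < 0$, in which case $H^{\gamma + 2\kappa}(\Omega)$ is only defined by duality and the cited Remark does not apply directly. I would handle this by writing $\|A^{-\kappa}v\|_{H^{\gamma+2\kappa}}$ as the supremum of $(A^{-\kappa}v, \varphi)$ over divergence-free test fields $\varphi$ with $\|\varphi\|_{H_0^{-\gamma - 2\kappa}} = 1$, transferring $A^{-\kappa}$ onto $\varphi$ via self-adjointness of $A$, and then applying the positive-exponent case already proved to $A^{-\kappa}\varphi$. Alternatively, classical Agmon--Douglis--Nirenberg regularity for the Stokes system on the smooth domain $\Omega$ directly supplies the embedding $D(A^{\beta/2}) \subset (H^\beta(\Omega))^3$ on the full scale $\beta \geq -1$, at which point the spectral identity closes the proof in one step.
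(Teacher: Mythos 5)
The paper states Lemma~\ref{lem:elliptic} without any proof, so there is no argument of the authors' to compare yours against; I can only assess your proposal on its own merits, and it is essentially correct and is surely the intended argument. The identity $\|A^{-\kappa}v\|_{\gamma+2\kappa}=\|v\|_{\gamma}$ in the intrinsic norms \eqref{eq:normalpha} is an exact spectral computation, and the real content of the lemma is exactly what you isolate: passing from the Stokes scale $D(A^{\beta/2})$ to the Sobolev scale $H^{\beta}(\Omega)$ of \eqref{eq:norm2}, which is what the remark following the lemma signals. Two caveats. First, the Remark preceding the lemma asserts only the set inclusion $D(A^{\beta/2})\subset V\cap (H^{\beta}(\Omega))^3$, while you need the continuous embedding $\|f\|_{H^{\beta}}\leq C\|f\|_{\beta}$; this is how such statements are standardly meant (it follows from Cattabriga/ADN regularity for the Stokes system on a smooth domain together with interpolation), but you should state explicitly that you are invoking the quantitative version. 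Second, in the regime $\gamma+2\kappa<0$ your duality reduction takes the supremum over divergence-free test fields, whereas the dual norm is a supremum over all of $H_0^{-\gamma-2\kappa}$; you must first insert the Leray projector and then control $\|\mathbb{P}\varphi\|_{-\gamma-2\kappa}$ by $\|\varphi\|_{H_0^{-\gamma-2\kappa}}$, which is the \emph{reverse} embedding and is not supplied by the Remark. Since $-\gamma-2\kappa\in(0,1)$ there, this is standard (boundedness of $\mathbb{P}$ on $H^s_0$ for $0<s<1$ and the identification of $D(A^{s/2})$ with divergence-free $H^s_0$ fields in that range), and your fallback via Stokes elliptic regularity on the full scale $\beta\geq -1$ is the cleanest way to close it. Note finally that at the only point where the lemma is invoked (Step 3 of the proof of Theorem~\ref{thm:bdd}) one has $\gamma=-1$ but $\gamma+2\kappa>0$, so the delicate negative-target case is not actually needed for the paper's application.
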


\begin{remark}
Note that the above inequality is between the norm defined in~\eqref{eq:norm2} (or its dual, for negative exponents) and the norm defined in~\eqref{eq:normalpha}.
\end{remark}

We moreover recall the following Proposition (Proposition IV.5.12 in~\cite{boyerfabrie}).
\begin{proposition}
We have the following Poincar\'e inequalities:
\begin{align}
&\|u\|^2_H \leq C \|\nabla u\|^2_{L^2} \quad \forall u \in V,\label{eq:poinca1}\\
&\|\nabla u\|^2_{L^2} \leq C \|A u\|^2_{L^2} \quad \forall u \in D(A).\label{eq:poinca2}
\end{align}
\end{proposition}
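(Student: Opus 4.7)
The plan is to derive both inequalities by reducing to the classical scalar Poincar\'e inequality on $H^1_0(\Omega)$, exploiting the zero-trace condition built into $V$ and the divergence-free condition to make the Leray projector transparent.

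For \eqref{eq:poinca1}, I would use that $V$ is by definition the closure of $\mathcal{V} \subset (\mathcal{C}^\infty_0(\Omega))^3$ in $(H^1_0(\Omega))^3$, so each Cartesian component $u_i$ of $u \in V$ lies in $H^1_0(\Omega)$. The classical scalar Poincar\'e inequality on the bounded domain $\Omega$ gives $\|u_i\|_{L^2(\Omega)} \leq C_\Omega \|\nabla u_i\|_{L^2(\Omega)}$, and summing over $i = 1,2,3$ yields $\|u\|_H^2 \leq C^2 \|\nabla u\|_{L^2}^2$, which is \eqref{eq:poinca1}.

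For \eqref{eq:poinca2}, the key identity I would establish is $(Au, u) = \|\nabla u\|_{L^2}^2$ for $u \in D(A)$. Writing $-\Delta u = Au + \nabla \pi$ from the Helmholtz decomposition, pairing with $u$ and integrating by parts, the term $(\nabla \pi, u)$ vanishes via $\dive u = 0$ and $u|_{\p \Omega} = 0$, while $(-\Delta u, u)$ integrates by parts to $\|\nabla u\|_{L^2}^2$ using the zero trace of $u$. Once this is in place, Cauchy--Schwarz yields $\|\nabla u\|_{L^2}^2 = (Au, u) \leq \|Au\|_{L^2} \|u\|_{L^2}$, and \eqref{eq:poinca1} bounds $\|u\|_{L^2} \leq C \|\nabla u\|_{L^2}$; cancelling one factor of $\|\nabla u\|_{L^2}$ (the case $\nabla u = 0$ being trivial via \eqref{eq:poinca1}) gives \eqref{eq:poinca2}.

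The one technical subtlety is justifying the integration-by-parts identity rigorously for a general $u \in D(A) = V \cap (H^2(\Omega))^3$ rather than only for $\varphi \in \mathcal{V}$. I would handle this most cleanly by expanding $u = \sum_k c_k w_k$ in the orthonormal eigenbasis of $A$ introduced just above: since each $w_k \in D(A) \subset V$ satisfies $\|\nabla w_k\|_{L^2}^2 = (Aw_k, w_k) = \lambda_k$, both $(Au, u)$ and $\|\nabla u\|_{L^2}^2$ equal $\sum_k \lambda_k c_k^2$, giving the identity with no boundary work. A density argument approximating $u$ in the $D(A)$ topology by elements of $\mathcal{V}$ would do the same job. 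This is the only non-routine point; everything else is cancellation and Cauchy--Schwarz.
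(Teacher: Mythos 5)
Your argument is correct. Note, however, that the paper does not prove this proposition at all: it simply recalls it as Proposition IV.5.12 of the cited reference of Boyer and Fabrie, so there is no in-paper proof to compare against. Your write-up supplies the standard argument that such a citation implicitly relies on: componentwise scalar Poincar\'e on $H^1_0(\Omega)$ for \eqref{eq:poinca1}, and the identity $(Au,u)=(-\Delta u,u)=\|\nabla u\|_{L^2}^2$ (using $\mathbb{P}u=u$ for $u\in H$ and the vanishing trace) combined with Cauchy--Schwarz and \eqref{eq:poinca1} for \eqref{eq:poinca2}. One small point in your spectral justification: to conclude $\|\nabla u\|_{L^2}^2=\sum_k \lambda_k c_k^2$ you need not only the diagonal identity $\|\nabla w_k\|_{L^2}^2=\lambda_k$ but also the orthogonality of the gradients, $(\nabla w_j,\nabla w_k)=(-\Delta w_k,w_j)=(Aw_k,w_j)=\lambda_k\delta_{jk}$, which follows from the same integration by parts; with that observation (or with the density argument you mention) the proof is complete.
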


We also recall the notion of trace from~\cite{boyerfabrie} (see Theorem III.2.19 and Definition III.2.20). In particular we recall that $\gamma_0$ is the trace operator which maps $W^{1, p}(\Omega)$ to $W^{1- \frac 1p, p}(\p \Omega)$.

\subsection{Proof of Theorem~\ref{thm:bdd}}
Recall the Voigt--MHD system:
\noeqref{eq:MHDVbd2}
\begin{align}
& \p_t u + A^{-\alpha}\mathbb{P}(u \cdot \nabla u - B \cdot \nabla B ) + A^{1-\alpha} u=0, \label{eq:MHDVbdd1}\\
& \p_t B + A^{-\alpha}\mathbb{P} (u \cdot \nabla B - B \cdot \nabla u) = 0,\label{eq:MHDVbd2}\\
&(u, B)|_{t = 0} = (u_0, B_0).\label{eq:MHDVbdd3}
\end{align}

\begin{proof}[Proof of Theorem~\ref{thm:bdd}]
We use Galerkin approximation. Let us consider $\mathbb{P}_k$ to be the orthogonal projector onto the first $k$ eigenfunctions of $A$. Let $W_k := \text{span }(w_1, \ldots, w_k)$. Using this, we first define the following Galerkin approximation of system~\eqref{eq:MHDVbdd1}--\eqref{eq:MHDVbdd3}:
\begin{align}
&\p_t \LL u_k + \mathbb{P}_k (u_k\cdot\nabla u_k - B_k \cdot \nabla B_k) + A u_k=0, \label{eq:galerkin1}\\
&\p_t \LL B_k + \mathbb{P}_k (u_k\cdot\nabla B_k - B_k \cdot \nabla u_k) = 0, \label{eq:galerkin2}\\
&(u_k, B_k)|_{t=0} = (\mathbb{P}_k u_0, \mathbb{P}_k B_0). \label{eq:galerkin3}
\end{align}

Here, $u_k, B_k \in W_k$. This reduces the system~\eqref{eq:MHDVbdd1}--\eqref{eq:MHDVbdd3} to a system of $2k$ ODEs.

Since $\mathbb{P}_k$ is orthogonal, we immediately deduce the following conservation law for the reduced system:
\begin{equation}\label{eq:cons-k}
\frac 12 \p_t ( \|u_k\|^2_{\alpha} + \|B_k\|^2_{\alpha}) + \|u_k\|^2_{(H^1_0(\Omega))^3} = 0.
\end{equation}

\vspace{5pt}

\noindent {\bf Step 1.} \emph{Global well-posedness of the projected system.} Picard's theorem, in conjunction with the bound given by~\eqref{eq:cons-k}, gives that solutions to~\eqref{eq:galerkin1}--\eqref{eq:galerkin3} are global.

\vspace{10pt}

\noindent {\bf Step 2.} \emph{Propagation of regularity.} We have that $(\mathbb{P}_k(u_0), \mathbb{P}_k(B_0)) \in D(A^{\beta/2})$, for all $\beta > \alpha$. We apply $A^{\beta - \alpha}$ to both equations~\eqref{eq:galerkin1}, \eqref{eq:galerkin2}. We multiply~\eqref{eq:galerkin1} by $A^{\beta - \alpha} u_k$, and we multiply~\eqref{eq:galerkin2} by $A^{\beta - \alpha} B_k$, and add them. Using Sobolev embedding, the Poincar\'e inequality~\eqref{eq:poinca2}, and the uniform control given by~\eqref{eq:cons-k}, we deduce that there is a constant $C(k, \beta, u_0, B_0)$ such that the following bound holds true for all $t \geq 0$ and all $k \geq 0$, $k \in \N$:
\begin{equation}
 \|u_k\|^2_{\beta} + \|B_k\|^2_{\beta} \leq C(k, \beta, u_0, B_0) e^{C(k, \beta, u_0, B_0)t}.
\end{equation}
This non-uniform a-priori bound will be needed to justify the application of Leibnitz rule to products of functions in the next step.

\vspace{10pt}

\noindent {\bf Step 3.} \emph{Time integrability of $\p_t u_k$.} The goal of this step is to show the following time integrability estimate:
\begin{equation}\label{eq:timeint}
\int_0^t (\|\p_t u_k(s)\|^2_{(L^2(\Omega))^3} + \frac c2 \|\p_t \nabla u_k(s)\|^2_{(L^2(\Omega))^3})  ds \leq C(u_0, B_0).
\end{equation}
Here, $C(u_0, B_0)$ is a constant which only depends on initial data, and $c > 0$ is a positive constant which does not depend on initial data.

To that end, we multiply equation~\eqref{eq:galerkin1} by $\p_t u_k$. We obtain, since $\mathbb{P}_k$ is orthogonal, denoting by $( \cdot, \cdot )_2$ the usual $L^2$ inner product, 
\begin{align*}
&\frac 12 \p_t \|A^{\frac 12}u_k\|^2_{(L^2(\Omega))^3} + (\p_t A^{\alpha/2}u_k, \p_t A^{\alpha/2}u_k )_2 + ( \p_t u_k, u_k \cdot \nabla u_k)_2 - ( \p_t u_k, B_k \cdot \nabla B_k)_2 =0.
\end{align*}
We have that $(\p_t A^{\alpha/2}u_k, \p_t A^{\alpha/2}u_k )_2 \geq c \|\p_t \nabla u_k\|_{L^2}$ for some constant $c>0$ (from the definition of $A$ and from inequality~\eqref{eq:poinca2}), so that
\begin{equation}\label{eq:est1}
\frac 12 \p_t \|A^{\frac 12}u_k\|^2_{(L^2(\Omega))^3}+\frac c 2 \|\p_t \nabla u_k\|_{(L^2(\Omega))^3}^2 + ( \p_t \nabla u_k, B_k \otimes B_k)_2 \leq  C\|u_k\|^2_{L^4(\Omega)}.
\end{equation}
In the above inequality, we used H\"older's inequality, integration by parts, the divergence free condition of $u_k$ and $B_k$ and the fact that, by the trace theorem, $u_k, B_k =0$ in $(H^{\frac 12}(\partial \Omega))^3$.
We then have that
\begin{equation}\label{eq:est2}
\begin{aligned}
 ( \p_t \nabla u_k, B_k \otimes B_k)_2 = \p_t ( \nabla u_k, B_k \otimes B_k)_2 -  ( \nabla u_k, \p_t B_k \otimes B_k)_2 -  ( \nabla u_k, B_k \otimes \p_t B_k)_2.
\end{aligned}
\end{equation}

Then, by H\"older's inequality,
\begin{equation}\label{eq:est3}
\begin{aligned}
&( \nabla u_k, \p_t B_k \otimes B_k)_2 = ( \nabla u_k,\LL^{-1}(\mathbb{P}_k(\nabla \times( u_k \times  B_k)))\otimes B_k)_2\\
&\leq C \|\nabla u_k\|_{L^2(\Omega)}\| \LL^{-1}(\mathbb{P}_k(\nabla \times( u_k \times  B_k))\|_{(L^3(\Omega))^3} \| B_k\|_{(L^6(\Omega))^3}\\
&\leq C \|\nabla u_k\|_{L^2(\Omega)}\|A^{\frac 14} \LL^{-1}(\mathbb{P}_k(\nabla \times( u_k \times  B_k))\|_{(L^2(\Omega))^3} \| B_k\|_{(L^6(\Omega))^3}\\
&\leq C \|\nabla u_k\|_{L^2(\Omega)}\|u_k\|_{(L^4(\Omega))^3} \| B_k\|_{(L^4(\Omega))^3}\| B_k\|_{(L^6(\Omega))^3}\\
&\leq C(u_0, B_0)\|\nabla u_k\|^2_{(L^2(\Omega))^3}.
\end{aligned}
\end{equation}
In the last inequality, $C(u_0, B_0)$ is a constant depending only on initial data, and moreover we used the Leibnitz rule, H\"older's inequality, Lemma~\ref{lem:elliptic}, Sobolev embedding, Poincar\'e's inequality~\eqref{eq:poinca1} and the conservation law~\eqref{eq:cons-k}.

We combine~\eqref{eq:est1}, \eqref{eq:est2}, \eqref{eq:est3} and use the Poincar\'e inequality~\eqref{eq:poinca2} in order to obtain
\begin{equation}
\begin{aligned}
&\frac 12 \p_t \big(\|A^{\frac 12}u_k\|^2_{(L^2(\Omega))^3}+ ( \nabla u_k, B_k \otimes B_k)_2\big) +\frac c 2 \|\p_t \nabla u_k\|_{(L^2(\Omega))^3}^2  \leq  C(u_0, B_0) \|\nabla u_k\|^2_{(L^2(\Omega))^3}.
\end{aligned}
\end{equation}
Integrating in time, and using again the conservation law~\eqref{eq:cons-k} on the boundary term at time $t$, and on the term on the RHS, plus the Poincar\'e inequality, we obtain, for all times $t \geq0$,
\begin{equation}
\int_0^t (\|\p_t u_k(s)\|^2_{(L^2(\Omega))^3} + \frac c2 \|\p_t \nabla u_k(s)\|^2_{(L^2(\Omega))^3})  ds \leq C(u_0, B_0).
\end{equation}

\vspace{10pt}

\noindent {\bf Step 4.} \emph{Taking the limit $k \to \infty$.} We notice that the embedding $D(A^{\gamma_1/2})\hookrightarrow D(A^{\gamma_2/2})$ is compact, whenever $\gamma_1 > \gamma_2$. Due to the conservation~\eqref{eq:cons-k}, the Aubin--Lions lemma, and a diagonal argument, we can take a subsequential limit in $k$ and obtain the limiting objects $(u,B)$ such that\footnote{We say that $f \in D((-\Delta)^{\frac{\beta-}{2}})$ if for all sufficiently small $\eta > 0$, $f \in D((-\Delta)^\frac{\beta - \eta}{2})$.}
$$
(u, B) \in L^\infty(0, T; D(A^{\frac{\alpha-}{2}})) \times L^\infty(0, T; D(A^{\frac{\alpha-}{2}}))
$$
for all $T \geq 0$.
Moreover, the convergence $u_k \to u$ and $B_k \to B$ is strong in the above topology. In addition, we have that, in the limit, the uniform bounds
\begin{align}\label{eq:cons-fin1}
&\frac 12   \|u(t)\|^2_{\alpha} + \frac 12   \|B(t)\|^2_{\alpha} + \int_0^t \|u(s)\|^2_{(H^1_0(\Omega))^3} ds 
\leq C(u_0, B_0),\\
&\int_0^t (\|\p_t u(s)\|^2_{(L^2(\Omega))^3} + \frac c2 \|\p_t \nabla u(s)\|^2_{(L^2(\Omega))^3})  ds \leq C(u_0, B_0).\label{eq:cons-fin2}
\end{align}
hold true for all $t \geq 0$. Moreover, $(u, B)$ satisfy system~\eqref{eq:MHDVbdd1}--\eqref{eq:MHDVbdd3} in the strong sense.

\vspace{10pt}

\noindent {\bf Step 5.} \emph{Proof of relaxation.} Once we have the bounds~\eqref{eq:cons-fin1},~\eqref{eq:cons-fin2}, we essentially repeat the same argument as in the $\T^3$ case, keeping in mind that this time we have a slightly different bound for $\p_t u$, in~\eqref{eq:cons-fin2}.

First, the bound~\eqref{eq:cons-fin2} implies that there is a sequence $t_i \in \R$, and a uniform constant $C_1>0$, depending only on the initial data $(u_0, B_0)$ such that 
\begin{align}
&\|\p_t \nabla u(t_i) \|_{(L^2(\Omega))^3} \leq \frac{C_1}{t_i}, \qquad t_i \in [2^{i}, 2^{i+1}].\label{eq:decay1}
\end{align}
Moreover, from~\eqref{eq:cons-fin1}, and using~\eqref{eq:decay1}, and the Poincar\'e inequality we deduce that 
\begin{equation*}
\|u(t)\|_{(L^2(\Omega))^3} \to 0 \qquad \text{as} \quad t \to \infty,
\end{equation*}
and by interpolation using the bound~\eqref{eq:cons-k}, we have also that
\begin{equation}\label{eq:decaygamma}
\|u(t)\|_{\gamma} \to 0 \qquad \text{as} \quad t \to \infty,
\end{equation}
for all $\gamma \in [0, \alpha)$.

Using the bounds~\eqref{eq:decay1},~\eqref{eq:decaygamma}, in conjunction with~\eqref{eq:MHDVbdd1}, we have that 
$$\mathbb{P}(B(t_i) \cdot \nabla B(t_i)) \to 0$$ 
weakly in $(L^2(\Omega))^3$. We note that 
$\mathbb{P}(B_\infty \cdot   \nabla B_\infty) \in L^2(\Omega),$
hence $B_\infty$ is a strong solution.

\vspace{10pt} 

\noindent {\bf Step 6.} \emph{$B_\infty$ is not a Beltrami flow.} We use an elementary argument (see also~\cite{vainshtein}). Suppose that we have a field $B \in D(A) \subset H_0^1(\Omega)$, such that $(\nabla \times B) \times B= 0$, $\dive B = 0$.

Then, $B \cdot \nabla B - \frac 12 \nabla |B|^2 = 0$, which implies, by taking the divergence, $\p_i \p_j(B_i B_j) - \frac 12 \Delta|B|^2 = 0$ (repeated indices are summed). We then multiply this equation by $|x|^2$ and integrate on $\Omega$. We integrate by parts twice. The boundary terms vanish due to the condition $B \in H_0^1(\Omega)$, and the bulk term yields $\|B\|_{L^2(\Omega)} = 0$.

\vspace{10pt}

\noindent {\bf Step 7.} \emph{The limiting $B_\infty$ is nontrivial in the case $\alpha = 1$.} For $\Psi \in D(A^{(\alpha +1)/2})$ sufficiently regular, denote the function
\begin{equation}
H[B, \Psi] := \int_\Omega \LL B \cdot \Psi dx.
\end{equation}
We then derive conditions on $\Psi$ such that the above expression is conserved.
We have:
\begin{equation}
\begin{aligned}
&\p_t \int_\Omega  \LL B \cdot \Psi dx =  \int_\Omega  \p_t  \LL  B \cdot \Psi dx +   \int_\Omega   \LL  B \cdot \p_t \Psi dx =  \int_\Omega \mathbb{P} (\nabla \times (u \times B) )\cdot \Psi dx +   \int_\Omega  B \cdot \p_t \LL  \Psi dx\\
& = \int_\Omega (u \times B) \cdot (\nabla \times \Psi) dx +   \int_\Omega  B \cdot \p_t \LL  \Psi dx =  \int_\Omega ((\nabla \times \Psi) \times u) \cdot B dx +   \int_\Omega  B \cdot \p_t \LL  \Psi dx\\
&=  \int_\Omega \mathbb{P}( (\nabla \times \Psi) \times u) \cdot B dx +   \int_\Omega  B \cdot \p_t \LL  \Psi dx.
\end{aligned}
\end{equation}
Here, we used integration by parts and the fact that $(V_1 \times V_2) \cdot V_3 = (V_3 \times V_1 ) \cdot V_2$. We then see that, upon setting
\begin{equation}
\p_t \LL \Psi  +\mathbb{P}((\nabla \times \Psi) \times u) =0,
\end{equation}
$H[B, \Psi]$ is conserved in time.

We set $\Psi$ to solve the following initial value problem:
\begin{equation}
\begin{aligned}
&\p_t \LL \Psi + \mathbb{P}( (\nabla \times \Psi) \times u )= 0,\\
&\Psi(t=0) = \Psi_0,
\end{aligned}
\end{equation}
where we chose $\Psi_0$ such that $\nabla \times \Psi_0 = B_0$. 
In this case, then, we have that $H[B, \Psi] = H[B_0, \Psi_0]  \neq 0$ is conserved.

Moreover, since $\alpha =1$, $- \mathbb{P}( \p_t \Delta \Psi + (\nabla \times \Psi) \times u )= 0,$
which implies $-\p_t \Delta \Psi + (\nabla \times \Psi) \times u = \nabla q,$ for some sufficiently regular function $q$. We then take the curl of the above equation, to obtain
$$
-\p_t \Delta( \nabla \times \Psi) + \nabla \times((\nabla \times \Psi) \times u) = 0.
$$
From this, and from the fact that $\nabla \times \Psi_0 = B_0$, we conclude that $\nabla \times \Psi = B$. This implies that $B_\infty$ is nontrivial.
\end{proof}

\appendix

\section{Periodic boundary conditions: setup}\label{app:setupper}

Let $\T^3$ be the three-dimensional torus.

\subsection{Norms and spaces}
 We work with the spaces $H^s(\T^3)$ and ${\dot H^s}(\T^3)$, which we define as follows.
\begin{definition}
Let $f \in \mathscr{P}'$, where $\mathscr{P}'$ is the space of periodic distributions on the $3$-dimensional torus $\T^3$. Then, consider the Fourier coefficients $\hat f(k)$, where $k \in \Z^3$. We say that $f \in H^s(\T^3)$ if the Fourier coefficients of $f$ satisfy:
\begin{equation}
\Vert f \Vert_{H^s(\T^3)} := \sum_{k \in \Z^3} (1+|k|^2)^{\frac s 2} |\hat f(k)|^2 < \infty. 
\end{equation}
 We also say that $f \in {\dot H^s}(\T^3)$, if the Fourier coefficients of $f$ satisfy:
\begin{equation}
 \hat f(0,0,0) = 0, \qquad \Vert f \Vert_{{\dot H^s}(\T^3)} := \sum_{\substack{k \in \Z^3\\ k \neq (0,0,0)}} (1+|k|^2)^{\frac s 2} |\hat f(k)|^2 < \infty. 
\end{equation}
We moreover denote
\begin{equation}
{\dot{L}}^2(\mathbb{T}^3) := {\dot{H}}^0(\mathbb{T}^3).
\end{equation}
\end{definition}

\begin{remark}
In the remainder of section~\ref{sec:relax} (and \emph{only} restricted to this section), we denote
$$
{\dot H^s} := {\dot H^s}(\mathbb{T}^3), \qquad H^s:= H^s(\mathbb{T}^3).
$$
\end{remark}

\begin{remark}
For conciseness, we denote $\Vert f \Vert_{s} := \Vert f \Vert_{{\dot H^s}}$, the \emph{homogeneous} Sobolev norm.
\end{remark}

 If $f, g \in {\dot L}^2(\mathbb{T}^3)$, we let
\begin{equation}
(f,g):= \int_{\T^3} fg \de x.
\end{equation}
the usual $L^2$ inner product. 

The dual of ${\dot H}^1$ is naturally identified with ${\dot H}^{-1}$. If $f \in {\dot H}^{-1}$ and $g \in {\dot H}^1$, we let $\langle f, g \rangle$ be the dual pairing between $f$ and $g$. Note that, if $f\in {\dot L}^2$, then $\langle f, g \rangle = (f,g)$. 

These scalar spaces extend in a straightforward manner to their vectorial counterparts. We shall use a superscript to denote vectorial spaces, i.e. if $v$ is a 3 dimensional vector field whose components lie in a space $W$, then we write $v \in W^3$.

\begin{definition}
We denote $H:= \{v \in L_0^2(\T^3): \text{div} \ v = 0 \}$, and $V:= \{v \in {\dot H}^1(\T^3): \text{div} \, v = 0 \}$.
\end{definition}

\begin{definition}\label{def:vspace}
We define the space $D((-\Delta)^{s/2})$ for $s \in \R$ as the domain of $(-\Delta)^{s/2}$:
\begin{equation}
D((-\Delta)^{s/2}) := \{v \in ({\dot H^s})^3: \text{ div } v = 0\}.
\end{equation}
Without ambiguity, we denote the homogeneous Sobolev $s$-norm on $D((-\Delta)^{s/2})$ also by $\| \cdot \|_s$. 
\end{definition}

\subsection{Leray projection, the fractional Laplacian, and classical estimates}

Let $\mathbb{P}: L_0^2 \to H$ be the Leray projector. In terms of Fourier coefficients, it can be expressed as
\begin{equation*}
(\widehat{\mathbb{P} f})_k := \Big(\text{Id}_3 - \frac{k \otimes k}{|k|^2} \Big)(\hat f)_k \quad \text{for } k \neq 0, \qquad (\widehat{\mathbb{P} f})_0 := 0.
\end{equation*}
Here, $\mathrm{Id}_3$ is the $3\times3$ identity matrix.

If $v, w \in V$, we define
\begin{equation}\label{eq:bformdef}
\boldb(v,w) := \mathbb{P}(v \cdot \nabla w).
\end{equation}
Let us also recall the following inequality, proved in~\cite{constbook}. There exists a constant $C > 0$ such that, if $u,v,w \in V$ and are all divergence-free,
\begin{equation}\label{eq:estb}
\langle \boldb(u,v), w \rangle \leq C \|u\|_0^{\frac 12} \Vert u \Vert_1^{\frac 12} \Vert v \Vert_1 \Vert w\Vert_1.
\end{equation}
We also recall that elements $u \in {\dot H}^1$ enjoy the following Poincar\'e inequality:
\begin{equation}\label{eq:poinca}
\| u \|_0 \leq \lambda \Vert u \Vert_1,
\end{equation}
where $\lambda > 0$ is the Poincar\'e constant.

We define the fractional Laplacian as the operator $(-\Delta)^\alpha: {\dot H^s} \to \dot H^{s-2\alpha}$ whose action on the Fourier coefficients is as follows:
\begin{equation}
(\widehat{(-\Delta)^\alpha f})_k = |k|^{2 \alpha} \hat f_k, \quad \text{for } k \neq 0, \qquad \hat f_0 = 0.
\end{equation}
We define
\begin{equation}
\LL :=  (-\Delta)^\alpha.
\end{equation}
We have the following lemma:
\begin{lemma}\label{lem:ellipticL}
Let $\LL :=  (-\Delta)^\alpha$, and assume that $\alpha \geq 1$. Then, $\LL$ maps ${\dot H^s}$ onto $\dot H^{s-2\alpha}$, is injective and has a well defined inverse $\LL^{-1}: \dot H^{s-2\alpha}\to {\dot H^s}$. Furthermore, the composition operators $\Delta \LL^{-1} =  \LL^{-1} \Delta$ are well defined, and are both bounded operators from ${\dot H^s}$ to $\dot H^{2\alpha - 2 +s}$. In particular, there exists a constant $C > 0$ such that, for all $v \in {\dot H^s}$,
\begin{equation}
\Vert \LL^{-1}\Delta v \Vert_{2\alpha - 2 + s} = \Vert \Delta \LL^{-1}v \Vert_{2\alpha -2 +s } \leq C \Vert v\Vert_{s}.
\end{equation}
\end{lemma}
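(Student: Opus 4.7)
The plan is to reduce everything to pointwise algebraic identities on Fourier coefficients, using that elements of ${\dot H}^s$ are mean-free and that $k \in \Z^3 \setminus \{0\}$ implies $|k|\ge 1$. Concretely, for $f \in {\dot H}^s$, write $f = \sum_{k\neq 0} \hat f_k \, e^{i k\cdot x}$. Then $\LL$ acts diagonally as $(\widehat{\LL f})_k = |k|^{2\alpha}\hat f_k$, and $\Delta$ acts diagonally as $(\widehat{\Delta f})_k = -|k|^2 \hat f_k$. Since both symbols are nonzero for $k\neq 0$, $\LL$ is injective on ${\dot H}^s$. For any $g\in \dot H^{s-2\alpha}$ (automatically mean-free), define $h$ by $\hat h_k := |k|^{-2\alpha}\hat g_k$ for $k\neq 0$ and $\hat h_0 := 0$; this produces the preimage, so $\LL$ is surjective onto $\dot H^{s-2\alpha}$ and $\LL^{-1}$ is the multiplication operator with symbol $|k|^{-2\alpha}$.

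Next I would verify the appropriate mapping properties of $\LL$ and $\LL^{-1}$ using the definition of the homogeneous Sobolev norm. Summing in Fourier,
\begin{equation*}
\|\LL f\|_{s-2\alpha}^2 = \sum_{k\neq 0}(1+|k|^2)^{s-2\alpha}\,|k|^{4\alpha}|\hat f_k|^2,
\end{equation*}
and since $|k|\ge 1$ the weight $(1+|k|^2)^{s-2\alpha}|k|^{4\alpha}$ is comparable to $(1+|k|^2)^s$ up to a universal constant. Hence $\|\LL f\|_{s-2\alpha} \le C\|f\|_s$, and applying the same argument to $\LL^{-1}$ gives $\|\LL^{-1}g\|_s \le C\|g\|_{s-2\alpha}$, so $\LL^{-1}$ is a well-defined bounded inverse.

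Finally, for the stated commutation and estimate, the symbols of $\Delta$ and $\LL^{-1}$ are scalar functions of $|k|$, so they commute pointwise in Fourier: $(\widehat{\Delta \LL^{-1} v})_k = -|k|^{2-2\alpha}\hat v_k = (\widehat{\LL^{-1}\Delta v})_k$, which proves both that the operators are equal and that their common symbol is $-|k|^{2-2\alpha}$. Then
\begin{equation*}
\|\LL^{-1}\Delta v\|_{2\alpha-2+s}^2 = \sum_{k\neq 0}(1+|k|^2)^{2\alpha-2+s}\,|k|^{2(2-2\alpha)}|\hat v_k|^2,
\end{equation*}
and again using $|k|\ge 1$ together with $\alpha\ge 1$, the weight is bounded by a multiple of $(1+|k|^2)^s$, yielding $\|\LL^{-1}\Delta v\|_{2\alpha-2+s}\le C\|v\|_s$.

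The main (and essentially only) obstacle is the weight comparison $(1+|k|^2)^{a}|k|^{2b}\asymp (1+|k|^2)^{a+b}$ for $|k|\ge 1$; this comparison is the reason mean-freeness is imposed in the definition of ${\dot H}^s$ and the reason the hypothesis $\alpha\ge 1$ is convenient (it ensures the symbol of $\LL^{-1}\Delta$ is bounded so no low-frequency issues can arise). Once this elementary inequality is noted, all three assertions of the lemma are immediate from multiplier calculus on Fourier coefficients.
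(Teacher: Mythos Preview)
Your proposal is correct and is precisely the approach the paper takes: the paper's entire proof reads ``The proof follows from the spectral characterization of the operator $(-\Delta)^\alpha$,'' and you have simply written out the routine Fourier multiplier computation that this sentence points to. There is nothing to add.
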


\begin{remark}
The lemma above holds true replacing all instances of $\dot H^\gamma$ with $D((-\Delta)^{\gamma/2})$.
\end{remark}

\begin{proof}[Proof of lemma~\ref{lem:ellipticL}]
The proof follows from the spectral characterization of the operator $(-\Delta)^\alpha$.
\end{proof}

\begin{remark}
By the Fourier characterization of $(-\Delta)^\alpha$ and of $\mathbb{P}$, it is evident that both these operators commute with partial derivatives. Furthermore, we have that
$$
[\mathbb{P}, \LL] =0, \qquad [\mathbb{P}, (-\Delta)^\alpha] =0.
$$
\end{remark}

We also recall the definition of time-dependent spaces. Let $(X, \Vert \cdot \Vert_X)$ be a Banach space, and let $T > 0$. We say that a function $f: [0,T] \to X$ is such that $f \in L^\infty(0, T; X)$ if the following holds: 
$$\sup_{t \in [0,T]  } \Vert f(t) \Vert_X < \infty.$$
Let $f: [0,T] \to X$ be Bochner integrable, and define $f'$ as the weak time derivative of $f$. We then say that $f \in \mathcal{C}^1(0, T; X)$ if there holds 
$$\sup_{t \in [0,T]  } (\Vert f(t) \Vert_X + \Vert f'(t) \Vert_X) < \infty.$$

\subsection{The viscous Voigt--MHD system}\label{sec:voigtsys}

We focus our attention on the Voigt regularized system~\eqref{eq:voigtMHDintro1}--\eqref{eq:voigtMHDintro4} (recall that $\mathbb{L} = (- \Delta)^\alpha$): 
\noeqref{eq:MHDV2}
\begin{align}
&\p_t u + \mathbb{L}^{-1}(u \cdot \nabla u - B \cdot \nabla B) = \nu \mathbb{L}^{-1} \Delta u, \label{eq:MHDV1} \\
&\p_t  B + \mathbb{L}^{-1}(u \cdot \nabla B - B \cdot \nabla u) = 0, \label{eq:MHDV2} \\
&(u, B)|_{t = 0} = (u_0, B_0). \label{eq:MHDV3}
\end{align}

Here, $u(x,t)$ and $B(x,t)$ are three-dimensional vector fields depending on position $x \in \T^3$ (the three-dimensional flat torus) and time $t$. Furthermore $\nu >0$ is a fixed parameter, and $q(x,t)$ is the pressure term.

Let $\alpha \geq 1$. We set up initial data $u_0$ and $B_0$ such that $u_0, B_0 \in D((-\Delta)^{\alpha/2})$ (see Definition~\ref{def:vspace}). In particular, $u_0$, $B_0$ satisfy:
\begin{equation}\label{eq:initcond}
\dive  u_0 = 0, \qquad \dive B_0 = 0,
\end{equation}
and moreover
\begin{equation}\label{eq:meanz}
	\int_{\T^3} B_0 \de x = 0, \qquad \int_{\T^3} u_0 \de x = 0.
\end{equation}

\subsection{Well-posedness, global existence and regularity}\label{sec:globalvoigt}

We show that the Voigt--MHD system~\eqref{eq:MHDV1}--\eqref{eq:MHDV3} is locally well posed for strong solutions, and it moreover admits global solutions for large initial data. In the Voigt case, strong regularization easily implies well-posedness. Note that the issue is generally more involved in the case of non-regularized systems: for a discussion of local ill-posedness for a wide range of hydrodynamical systems we refer the reader to~\cite{elgindimasmoudi}.

We first prove a local existence statement with a suitable continuation criterion.

\begin{proposition}[Local existence of solutions to~\eqref{eq:MHDV1}--\eqref{eq:MHDV3}]\label{prop:localMHD}

Let $(u_0, B_0)$ in $D((-\Delta)^{\alpha/2})$ with $\alpha \geq 1$, and let $u_0$ and $B_0$ satisfy the divergence-free condition~\eqref{eq:initcond} and the mean zero condition~\eqref{eq:meanz}. Then, there exists a time $T > 0$ and $u, B \in L^\infty(0,T; D((-\Delta)^{\alpha/2})$ which solve~\eqref{eq:MHDV1}--\eqref{eq:MHDV3} in the strong sense, and such that
\begin{equation*}
(u, B)|_{t= 0} = (u_0, B_0).
\end{equation*}
Furthermore, if $T_*$ is the maximal time of existence, we necessarily have either
\begin{equation}
\limsup_{t \to T_*} \Vert u(\cdot, t) \Vert_{\alpha} = \infty, \quad or \qquad \limsup_{t \to T_*} \Vert B(\cdot, t) \Vert_{\alpha} = \infty.
\end{equation}
\end{proposition}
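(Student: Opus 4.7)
The strategy is to recast the Voigt--MHD system as an ODE on the Hilbert space $\mathcal{H} := D((-\Delta)^{\alpha/2}) \times D((-\Delta)^{\alpha/2})$ and invoke the Cauchy--Lipschitz theorem on Banach spaces. Writing the system abstractly as
$$
\p_t \begin{pmatrix} u \\ B \end{pmatrix} = F(u,B), \qquad F(u,B) := \begin{pmatrix} -\LL^{-1}\mathbb{P}(u \cdot \nabla u - B \cdot \nabla B) + \nu \LL^{-1}\Delta u \\ -\LL^{-1}\mathbb{P}(u \cdot \nabla B - B \cdot \nabla u) \end{pmatrix},
$$
the whole point of the Voigt regularization is that the strong smoothing provided by $\LL^{-1}$ renders $F$ locally Lipschitz from $\mathcal{H}$ into itself, so no parabolic machinery is needed. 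Note that $\mathbb{P}$ commutes with $\LL^{-1}$, so that $F$ automatically preserves the divergence-free and mean-zero constraints.

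First I would check that $F$ maps $\mathcal H$ into itself. Since $\alpha \geq 1$, any $u \in D((-\Delta)^{\alpha/2})$ lies in ${\dot H}^1 \hookrightarrow L^6(\T^3)$ by Sobolev embedding, so the divergence-free identity $u \cdot \nabla u = \dive(u \otimes u)$ together with $u \otimes u \in L^3 \subset L^2$ puts the quadratic terms in ${\dot H}^{-1}$. By Lemma~\ref{lem:ellipticL}, $\LL^{-1}$ gains $2\alpha$ derivatives, mapping ${\dot H}^{-1}$ into ${\dot H}^{2\alpha - 1} \subset {\dot H}^\alpha$ (using $\alpha \geq 1$), and analogously maps $\LL^{-1}\Delta : {\dot H}^\alpha \to {\dot H}^\alpha$. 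For the local Lipschitz property on the ball of radius $R$ in $\mathcal H$, I would split the differences bilinearly, e.g.
$$
u_1 \cdot \nabla u_1 - u_2 \cdot \nabla u_2 = \dive\bigl((u_1 - u_2) \otimes u_1\bigr) + \dive\bigl(u_2 \otimes (u_1 - u_2)\bigr),
$$
and apply the same $L^6 \cdot L^3 \subset L^2$ estimate followed by Lemma~\ref{lem:ellipticL}. This yields
$$
\|F(u_1,B_1) - F(u_2,B_2)\|_{\mathcal H} \leq C(R) \|(u_1,B_1) - (u_2,B_2)\|_{\mathcal H},
$$
uniformly on bounded subsets of $\mathcal H$. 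The Cauchy--Lipschitz theorem on the Banach space $\mathcal H$ then produces, for each $(u_0, B_0) \in \mathcal H$, a unique maximal solution $(u,B) \in \mathcal C^1([0, T_*); \mathcal H)$, which in particular belongs to $L^\infty(0,T; D((-\Delta)^{\alpha/2}))^2$ for every $T < T_*$.

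For the blow-up alternative, the standard ODE dichotomy applies: if $T_* < \infty$ and $\limsup_{t \to T_*}(\|u(\cdot,t)\|_\alpha + \|B(\cdot,t)\|_\alpha) =: M < \infty$, then one may pick $t_0 < T_*$ so close to $T_*$ that the local existence time $\tau = \tau(M+1)$ furnished by the Lipschitz estimate satisfies $t_0 + \tau > T_*$, and restarting the Picard iteration from data $(u(\cdot,t_0), B(\cdot,t_0))$ extends the solution past $T_*$, contradicting maximality. The only genuinely substantive step in the whole argument is the first one — verifying the Lipschitz estimate — and its only delicate input is the threshold $\alpha \geq 1$, which is precisely what guarantees both the Sobolev embedding $D((-\Delta)^{\alpha/2}) \hookrightarrow L^6$ and the gain ${\dot H}^{2\alpha - 1} \hookrightarrow {\dot H}^\alpha$; everything else is routine functional-analytic bookkeeping.
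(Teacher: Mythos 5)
Your proof is correct and follows essentially the same route as the paper: both recast the Voigt system as an ODE with a locally Lipschitz right-hand side on a fixed Hilbert space and apply Picard--Lindel\"of, with the Lipschitz property coming from the $2\alpha$-derivative gain of $\LL^{-1}$ and the threshold $\alpha \geq 1$, followed by the standard continuation dichotomy. The only cosmetic difference is that the paper works in the transformed variables $v = \LL u$, $Z = \LL B$ in $D((-\Delta)^{-\alpha/2})$ and invokes the trilinear estimate~\eqref{eq:estb}, whereas you work directly with $(u,B)$ in $D((-\Delta)^{\alpha/2})$ and bound the quadratic terms via the divergence-form splitting and the embedding ${\dot H}^1(\T^3) \hookrightarrow L^6(\T^3)$; since $\LL$ is an isomorphism between the two spaces, these formulations are equivalent.
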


\begin{proof}[Proof of Proposition~\ref{prop:localMHD}]
The proof follows by Picard iteration carried out in the space 
$$(u, B) \in L^\infty(0,T; D((-\Delta)^{\alpha/2})).$$ We follow the approach in~\cite{lariostiti} (Theorem 6.1). 

We consider the following evolution equations, which are equivalent to system~\eqref{eq:MHDV1}--\eqref{eq:MHDV3}:
\begin{equation}
\begin{aligned}
&\p_t \LL u = \mathbb{P}(B \cdot \nabla B - u \cdot \nabla u) + \nu \Delta u, \\
&\p_t \LL B  = \mathbb{P} (- u \cdot  \nabla B + B \cdot \nabla u).
\end{aligned}
\end{equation}
We then let $v = \LL u$, $Z = \LL B$, and we rewrite the system as follows:
\begin{equation}
\begin{aligned}
&\p_t v = \mathbb{P}(\LL^{-1}Z \cdot \nabla \LL^{-1}Z - \LL^{-1}v \cdot \nabla \LL^{-1}v) + \nu \Delta \LL^{-1}v =: N_1(v,Z), \\
&\p_t Z  = \mathbb{P} (- (\LL^{-1}v)\cdot  \nabla (\LL^{-1}Z) + (\LL^{-1}Z) \cdot \nabla (\LL^{-1}v)) =: N_2(v,Z).
\end{aligned}
\end{equation}
The idea is to show that $N_1$ and $N_2$ are Lipschitz mappings from $D((-\Delta)^{-\alpha/2}))$ to itself, and then the Picard--Lindel\"of theorem will apply. We start from $N_1$, using the conventions $v_i = \LL u_i$, $Z_i = \LL B_i$:
\begin{align*}
& \Vert N_1(v_1, Z_1) - N_1(v_2, Z_2)\Vert_{-\alpha} \\
&\leq \Vert \boldb(u_1 - u_2, u_2) + \boldb(u_1 , u_1 -u_2)  \Vert_{-\alpha} + \Vert \boldb(B_1 - B_2, B_2) + \boldb(B_1 , B_1 -B_2)  \Vert_{-\alpha} \\
&\qquad + \nu \Vert \Delta (u_1 - u_2) \Vert_{-\alpha} \\
&\leq C\|u_1 -u_2\|_0 \Vert u_1 - u_2\Vert_1 \Vert u_2 \Vert_1 + C \|u_1\|_0 \Vert u_1\Vert_1 \Vert u_1- u_2 \Vert_1 \\
&\quad  +C\|B_1 -B_2\|_0 \Vert B_1 - B_2\Vert_1 \Vert B_2 \Vert_1 + C \|B_1\|_0 \Vert B_1\Vert_1 \Vert B_1- B_2 \Vert_1 +\nu \Vert \Delta (u_1 - u_2) \Vert_{-\alpha}\\
&\leq C\lambda (\Vert v_1 \Vert_{1-2\alpha} +  \Vert v_2 \Vert_{1-2\alpha} ) \Vert v_1 - v_2  \Vert_{1-2\alpha} + C \lambda (\Vert Z_1 \Vert_{1-2\alpha} +  \Vert Z_2 \Vert_{1-2\alpha} ) \Vert Z_1 - Z_2  \Vert_{1-2\alpha} \\
&\quad + C \nu \Vert v_1 - v_2\Vert_{-\alpha}\\
&\leq C\lambda (\Vert v_1 \Vert_{-\alpha} +  \Vert v_2 \Vert_{-\alpha} ) \Vert v_1 - v_2  \Vert_{-\alpha} + C \lambda (\Vert Z_1 \Vert_{-\alpha} +  \Vert Z_2 \Vert_{-\alpha} ) \Vert Z_1 - Z_2  \Vert_{-\alpha} \\
&\quad + C \nu \Vert v_1 - v_2\Vert_{-\alpha}.
\end{align*}

Here, we used inequality~\eqref{eq:estb} to go from the second to the third line, the Poincar\'e inequality~\eqref{eq:poinca}, the fact that, for $\alpha \geq \gamma$, we have $\Vert f \Vert_\alpha \geq \Vert f\Vert_\gamma$, and the fact (proved in Lemma~\ref{lem:ellipticL}) that, for $z \in D((-\Delta)^{\alpha / 2})$,
$$
\Vert\Delta\LL^{-1}  z\Vert_{-\alpha} =\Vert\LL^{-1} \Delta z\Vert_{-\alpha} \leq C \Vert z \Vert_{-\alpha},
$$
since $1-2\alpha \leq -\alpha$ if $\alpha \geq 1$. 
Similarly, we have, for the terms in $N_2$, 
\begin{equation*}
\begin{aligned}
& \Vert N_2(v_1, Z_1) - N_2(v_2, Z_2)\Vert_{-\alpha} \\
&\leq \Vert \boldb(B_1 - B_2, u_1) + \boldb(B_2 , u_1 -u_2)  \Vert_{-\alpha} + \Vert \boldb(u_2 - u_1, B_2) + \boldb(u_1 , B_2 -B_1)  \Vert_{-\alpha} \\
&\leq C\|B_1 -B_2\|_0 \Vert B_1 - B_2\Vert_1 \Vert u_1\Vert_1 + C \|B_2\|_0 \Vert B_2\Vert_1 \Vert u_1- u_2 \Vert_1 +\\
&\quad  +C\|u_1 -u_2\|_0 \Vert u_1 - u_2\Vert_1 \Vert B_2 \Vert_1 + C \|u_1\|_0 \Vert u_1\Vert_1 \Vert B_1- B_2 \Vert_1 \\
&\leq C\lambda (\Vert v_1 \Vert_{-\alpha} +  \Vert v_2 \Vert_{-\alpha} ) \Vert v_1 - v_2  \Vert_{-\alpha} + C \lambda (\Vert Z_1 \Vert_{-\alpha} +  \Vert Z_2 \Vert_{-\alpha} ) \Vert Z_1 - Z_2  \Vert_{-\alpha}.
\end{aligned}
\end{equation*}
We conclude that the mapping $(N_1, N_2)$ is locally Lipschitz in $D((-\Delta)^{-\alpha/2}))$, which concludes the existence proof by an application of the Picard--Lindel\"of theorem. Finally, the continuation criterion is evident from the fact that 
$$
\Vert \LL^{-1} u \Vert_{-\alpha} \geq C \Vert u \Vert_{\alpha}.
$$
This concludes the proof.
\end{proof}

We show that high norms are propagated by the system~\eqref{eq:MHDV1}--\eqref{eq:MHDV3}.

\begin{theorem}[Global existence of solutions to~\eqref{eq:MHDV1}--\eqref{eq:MHDV3}]\label{thm:global}
Let $(u_0, B_0)$ in $D((-\Delta)^{\alpha/2}))$, with $\alpha \geq 1$, satisfying~\eqref{eq:initcond} and~\eqref{eq:meanz}. Then, there exist $(u, B) \in L^\infty(0,\infty; D((-\Delta)^{\alpha/2})))$ which solve~\eqref{eq:MHDV1}--\eqref{eq:MHDV3} in the sense of distributions, and such that
\begin{equation*}
(u, B)|_{t= 0} = (u_0, B_0).
\end{equation*}
\end{theorem}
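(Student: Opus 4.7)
The plan is to combine the local existence statement of Proposition~\ref{prop:localMHD} with a uniform-in-time a priori bound on $\|u(\cdot,t)\|_\alpha^2 + \|B(\cdot,t)\|_\alpha^2$, which rules out blow-up via the continuation criterion of that proposition and thereby promotes the local strong solution to a global one in $L^\infty(0,\infty; D((-\Delta)^{\alpha/2}))$. The a priori bound will come from the natural energy identity, essentially~\eqref{eq:enintro}. Because $\alpha \geq 1$, the local solution has enough regularity to justify all the pairings at the strong level through the trilinear estimate~\eqref{eq:estb}; alternatively a Galerkin truncation, analogous to the one carried out in Section~\ref{sec:bdd} for the bounded-domain case, provides an equivalent rigorous route.

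To derive the identity, rewrite~\eqref{eq:MHDV1}--\eqref{eq:MHDV2} in the equivalent form $\p_t \LL u + \mathbb{P}(u\cdot\nabla u - B\cdot\nabla B) = \nu \Delta u$ and $\p_t \LL B + \mathbb{P}(u\cdot\nabla B - B\cdot\nabla u) = 0$, and take $L^2(\T^3)$ inner products with $u$ and $B$ respectively. Since $\LL$ is self-adjoint and positive-definite on mean-zero fields, the time-derivative terms combine to $\tfrac12 \tfrac{\de}{\de t}(\|u\|_\alpha^2 + \|B\|_\alpha^2)$; the self-transport pairings $(u\cdot\nabla u, u)$ and $(u\cdot\nabla B, B)$ vanish by integration by parts together with $\dive u = 0$; the viscous term contributes $-\nu\|\nabla u\|_0^2$; and the magnetic couplings cancel via
\begin{equation*}
(B\cdot\nabla B, u) + (B\cdot\nabla u, B) = \int_{\T^3} B_j \, \p_j (u_i B_i) \,\de x = 0,
\end{equation*}
using $\dive B = 0$. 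Summing yields the energy identity
\begin{equation*}
\tfrac{1}{2}\tfrac{\de}{\de t}\bigl(\|u\|_\alpha^2 + \|B\|_\alpha^2\bigr) + \nu \|\nabla u\|_0^2 = 0,
\end{equation*}
whose integration in time gives precisely~\eqref{eq:enintro}.

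Consequently $\|u(\cdot, t)\|_\alpha$ and $\|B(\cdot, t)\|_\alpha$ stay bounded by $(\|u_0\|_\alpha^2 + \|B_0\|_\alpha^2)^{1/2}$ on the whole interval of existence, so the blow-up alternative in Proposition~\ref{prop:localMHD} forces the maximal existence time to be $T_* = +\infty$. The resulting $(u,B)$ then lies in $L^\infty(0,\infty; D((-\Delta)^{\alpha/2}))$ and solves~\eqref{eq:MHDV1}--\eqref{eq:MHDV3} in the strong, and thus distributional, sense, as required. There is no genuine analytical obstacle here; the only care needed is in verifying absolute convergence of the trilinear pairings and the legitimacy of the integrations by parts at the strong-solution level for general $\alpha \geq 1$, both of which follow directly from~\eqref{eq:estb} combined with the Sobolev embedding $\dot H^\alpha \hookrightarrow \dot H^1 \hookrightarrow L^6$.
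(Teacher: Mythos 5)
Your proposal is correct and follows essentially the same route as the paper: derive the a priori energy identity $\tfrac12\tfrac{\de}{\de t}(\|u\|_\alpha^2+\|B\|_\alpha^2)+\nu\|\nabla u\|_0^2=0$ by pairing the equations with $u$ and $B$ and exploiting the cancellation of the magnetic coupling terms, then invoke the blow-up alternative of Proposition~\ref{prop:localMHD} (with Galerkin approximation to justify the formal estimates). The only cosmetic difference is that the paper phrases the cancellation via the identities $(\boldb(B,B),u)=-(\boldb(B,u),B)$ and $(\boldb(u,B),B)=0$ rather than your single combined integration by parts, which is the same computation.
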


\begin{proof}[Proof of Theorem~\ref{thm:global}]

Due to the continuation criterion of Proposition~\ref{prop:localMHD}, we only need to show that $\Vert u\Vert_{\alpha}+\Vert B\Vert_{\alpha}$ is bounded a-priori in terms of initial data. We have, taking the $L^2 $ inner product of the momentum equation with $u$,
\begin{equation*}
\begin{aligned}
&\frac 12 \frac{\text{d}}{\text{d}t}(\Vert u \Vert^2_{\alpha})=  ( u,  \p_t \LL u) =  - \nu \Vert u \Vert_1^2 + (\boldb(B,B),u) - (\boldb(u,u),u)  =  - \nu \Vert u \Vert_0^2 - (\boldb(B,u),B). 
\end{aligned}
\end{equation*}
On the other hand, we have, taking the $L^2$ inner product of the induction equation with $B$,
\begin{equation*}
\frac 12 \frac{\text{d}}{\text{d}t}(\Vert B \Vert^2_{\alpha})=  ( B,  \p_t \LL B) =  -(\boldb(u,B),B) + (\boldb(B,u),B).
\end{equation*}
Recalling that $(\boldb(u,B),B)= 0$, and summing the two previous equations, we finally get, for all times $t_2 \geq t_1\geq 0$,
\begin{equation}\label{eq:energybalance}
\begin{aligned}
&\Vert u(\cdot, t_2) \Vert^2_{\alpha}+ \Vert B(\cdot, t_2) \Vert^2_{\alpha} + \nu \int_{t_1}^{t_2} \Vert \nabla u(\cdot, s) \Vert^2 \de s \leq \Vert u(\cdot, t_1) \Vert^2_{\alpha}+ \Vert B(\cdot, t_1) \Vert^2_{\alpha},
\end{aligned}
\end{equation}
which provides the required a-priori control. These a-priori estimates are only formal, but can be made rigorous by Galerkin approximation.
\end{proof}

We show that the system~\eqref{eq:MHDV1}--\eqref{eq:MHDV3} propagates higher regularity.

\begin{proposition}[Higher regularity of solutions to~\eqref{eq:MHDV1}--\eqref{eq:MHDV3}]\label{thm:reg}
Let $(u_0, B_0)$ in $D((-\Delta)^{\beta/2}))$, with $\beta \geq \alpha\geq 1$. Then, the solution $(u, B)$ to the system~\eqref{eq:MHDV1}--\eqref{eq:MHDV3} constructed in Theorem~\eqref{thm:global}  satisfies the stronger bounds:
\begin{equation*}
\Vert (u,B) \Vert_{L^\infty(0,T; D((-\Delta)^{\beta/2})))} \leq C(\Vert u_0\Vert_{\beta},  \Vert B_0\Vert_{\beta}, T).
\end{equation*}
\end{proposition}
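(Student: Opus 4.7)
The plan is to produce a uniform $H^\beta$ a priori bound on Galerkin approximants of~\eqref{eq:MHDV1}--\eqref{eq:MHDV3} and transfer it to the solution $(u,B)$ of Theorem~\ref{thm:global} by compactness. Local well-posedness at the higher regularity is not the obstacle: the Picard argument of Proposition~\ref{prop:localMHD}, carried out with $\alpha$ replaced by $\beta$ in the choice of ambient space for $v = \LL u$ and $Z = \LL B$, produces a local $H^\beta$ solution that, by the uniqueness already known at the $H^\alpha$ level, must coincide with the solution of Theorem~\ref{thm:global} on their common interval. Combined with a continuation argument, this reduces the proposition to a quantitative upper bound on $\|u(\cdot,t)\|_\beta^2 + \|B(\cdot,t)\|_\beta^2$ in terms of data, $\beta$, and $T$.

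To produce the bound, I would work at the Galerkin level (the same approximation whose rigorous justification is tacit in the proof of Theorem~\ref{thm:global}) and mimic the $\alpha$-level identity~\eqref{eq:energybalance} one rung up the Sobolev ladder. Concretely, I pair the projected momentum equation with $(-\Delta)^{\beta-\alpha} u_k$ and the projected induction equation with $(-\Delta)^{\beta-\alpha} B_k$, then sum. Since $(-\Delta)^s$, $\mathbb{P}$, and the Galerkin projector commute on $\T^3$, the time-derivative terms collapse to $\tfrac{1}{2}\tfrac{d}{dt}\bigl(\|u_k\|_\beta^2 + \|B_k\|_\beta^2\bigr)$ and the viscous term yields the sign-definite $\nu \|u_k\|_{\beta-\alpha+1}^2$. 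For the four trilinear nonlinearities I would peel off the principal part by writing, with $\gamma = (\beta-\alpha)/2$,
\begin{equation*}
(-\Delta)^\gamma (u_k \cdot \nabla u_k) = u_k \cdot \nabla (-\Delta)^\gamma u_k + \bigl[(-\Delta)^\gamma, u_k \cdot \nabla\bigr] u_k,
\end{equation*}
and analogously for the other trilinear terms. The principal pieces cancel between the two equations by the same divergence-free integration by parts that closed Theorem~\ref{thm:global}, now applied to $(-\Delta)^\gamma u_k$ and $(-\Delta)^\gamma B_k$ in place of $u_k$ and $B_k$. The Kato--Ponce commutator remainders are controlled using the uniform bound $\|u\|_\alpha + \|B\|_\alpha \leq M$ from~\eqref{eq:energybalance}, together with interpolation between $\|\cdot\|_\alpha$ and $\|\cdot\|_\beta$. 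The net outcome is a differential inequality
\begin{equation*}
\frac{d}{dt}\bigl(\|u_k\|_\beta^2 + \|B_k\|_\beta^2\bigr) \leq C(M,\beta)\bigl(1 + \|u_k\|_\beta^2 + \|B_k\|_\beta^2\bigr),
\end{equation*}
closed by Grönwall and passed to the limit $k \to \infty$ by Aubin--Lions and weak-$*$ lower semicontinuity.

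The main obstacle is the Kato--Ponce step at the borderline case $\alpha = 1$, since $H^1(\T^3)$ fails to embed into $L^\infty$ and one cannot simply park the low-regularity factor there. My workaround is to use the Littlewood--Paley formulation of the commutator estimate, placing the low-regularity factor in $L^6$ (available via $H^1 \hookrightarrow L^6$) and the remaining factor in a matching $L^3$-type space handled by interpolation between $\|u_k\|_\alpha$ and $\|u_k\|_\beta$; in the easier regime $\alpha > 3/2$ one simply invokes $H^\alpha \hookrightarrow L^\infty$ and the classical product rule. Once the commutator estimate is secured, the cancellation, Grönwall, and compactness steps described above run without further surprises.
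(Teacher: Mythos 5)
Your proposal is correct in outline and rests on the same core mechanism as the paper's proof: a higher-order energy estimate in which the top-order transport terms cancel by the divergence-free structure (the identity $(\boldb(a,b),c)=-(\boldb(a,c),b)$ applied at the higher regularity level), leaving lower-order remainders that are closed by Gr\"onwall against the conserved $H^\alpha$ energy~\eqref{eq:energybalance}. The implementation differs in one substantive way. The paper restricts to integer increments $\beta=\alpha+k$, differentiates the equations by a genuine partial derivative $\p_i$, and treats the Leibniz remainders $(\boldb(\p_i u,u),\p_i u)$, $(\boldb(\p_i B,B),\p_i u)$, etc., directly by Sobolev embedding and interpolation (e.g.\ $|(\boldb(\p_i u,u),\p_i u)|\leq C\|u\|_2^{3/2}\|u\|_1^{3/2}$), with only the case $k=1$ written out and larger $k$ declared similar. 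You instead test with $(-\Delta)^{\beta-\alpha}$ and control the fractional remainders by Kato--Ponce commutator estimates. Your route buys the full statement as written --- arbitrary real $\beta\geq\alpha$, not just integer steps above $\alpha$ --- and makes explicit the Galerkin passage and the uniqueness-based identification with the Theorem~\ref{thm:global} solution, both of which the paper leaves tacit; the paper's route buys simplicity, since a whole derivative avoids commutator machinery altogether. The one place where your argument needs genuine care is the remainder estimate when $\alpha=1$ and $\beta$ is close to $1$: there the commutator order $\beta-\alpha$ is small but the natural intermediate norms (e.g.\ $\|B\|_{H^{3/2}}$ arising from putting a factor in $L^3$) need not lie between $\|\cdot\|_1$ and $\|\cdot\|_\beta$, so the H\"older/interpolation exponents must be rebalanced --- exploiting either the smallness of the commutator order or the dissipative term $\nu\|u\|^2_{\beta-\alpha+1}$, which at $\alpha=1$ controls the full $\|u\|_\beta$ --- rather than taken off the shelf. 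You correctly identify this borderline as the only obstacle, and your Littlewood--Paley workaround is the right kind of fix, so I regard the proposal as sound.
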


\begin{proof}[Sketch of proof] We only provide a sketch of the proof in the case $\beta = \alpha + k$, with $k \in \N$. Let us deal with the case $k = 1$. We formally\footnote{These formal estimates can be made rigorous by Galerkin approximation.} differentiate equations~\eqref{eq:MHDV1}--\eqref{eq:MHDV3} by $\partial_i$, and obtain
\begin{align*}
&\p_t (-\Delta)^\alpha \p_i u - \Delta(\p_i u) = \mathbb{P}(-\p_i u \cdot \nabla u-u \cdot \nabla \p_i u + \p_i B \cdot \nabla B +B \cdot \nabla \p_i B ),\\
&\p_t (-\Delta)^\alpha \p_i B =\mathbb{P}( -\p_i u \cdot \nabla B- u \cdot \nabla  \p_i B + \p_i B \cdot \nabla u + B \cdot \nabla \p_i u).
\end{align*}

Multiplying the first equation by $\p_i u$ and the second equation by $\p_i B$, summing over $i$, and integrating we get, using the fact that $(\boldb(a,b),c) = - (\boldb(a,c),b)$,
\begin{equation*}
\begin{aligned}
&\p_t ( \Vert u \Vert^2_{\alpha +1} +   \Vert B \Vert^2_{\alpha +1} ) + \nu \Vert u \Vert^2_2\\
&= -(\boldb(\p_i u, u), \p_i u)+(\boldb(\p_i B, B), \p_i u),\\
& \quad  - (\boldb(\p_i u, B), \p_i B) + (\boldb(\p_i B, u), \p_i B).
\end{aligned}
\end{equation*}
Now, by Sobolev embedding, it is clear that
$$
|(\boldb(\p_i u, u), \p_i u)| \leq C \Vert u\Vert_2^{\frac 32} \Vert u\Vert_1^{\frac 32}, \qquad |(\boldb(\p_i B, B), \p_i u)| \leq C \Vert u\Vert_2^{\frac 12} \Vert u\Vert_1^{\frac 12}\Vert B\Vert_2 \Vert B\Vert_1.
$$
Using the a-priori energy estimate, in conjunction with Gr\"onwall's inequality, we obtain
\begin{equation}
(u,B) \in L^\infty(0,T; D((-\Delta)^{(\alpha+1)/2})),
\end{equation}
i.e.~the required bound.

The proof for larger $k$ is similar, and we omit it here.
\end{proof}

\subsection{Magnetic potential}

We state and prove a lemma on the existence of the magnetic potential in the periodic case.

\begin{lemma}[Existence of the magnetic potential]\label{lem:magneticpot}
Let $(u, B)$ such that $$(u, B) \in L^\infty(0,T; D((-\Delta)^{\alpha/2}))$$ solving the system~\eqref{eq:MHDV1}--\eqref{eq:MHDV3}, according to Theorem~\ref{thm:global}, with divergence-free initial data $(u_0, B_0) \in D((-\Delta)^{\alpha/2})$. Let us consider the following initial value problem, for an unknown vector field $\Psi$:
\begin{equation}\label{eq:topotential}
\begin{aligned}
&\p_t \LL \Psi = u \times B,\\
&\Psi|_{t=0} = \Psi_0.
\end{aligned}
\end{equation}
Here, $\Psi_0$ is the unique $\dot H^{\alpha+1}(\T^3)$ vector field satisfying the following two properties:
\begin{equation}\label{eq:initcondpsi}
\nabla \times \Psi_0 = B_0, \qquad \text{div } \Psi_0 = 0.
\end{equation}
Under these conditions, we have that the system~\eqref{eq:topotential} has a global solution $\Psi \in L^\infty(0,\infty; D((-\Delta)^{(\alpha+1)/2}))$ which satisfies~\eqref{eq:topotential} and~\eqref{eq:initcondpsi}. Furthermore, the following equality holds true for all $t \geq 0$ and $x \in \T^3$.
\begin{equation}\label{eq:potential}
\nabla \times \Psi = B.
\end{equation} 
\end{lemma}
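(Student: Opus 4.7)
The plan has three pieces: (1) construct $\Psi_0$ by elliptic inversion; (2) solve the linear evolution for $\Psi$ by direct time-integration of $\LL^{-1}(u\times B)$; (3) identify $\nabla \times \Psi$ with $B$ via uniqueness for a common linear equation they both satisfy. The main analytic input is Lemma~\ref{lem:ellipticL} together with Sobolev embedding; everything else is essentially algebraic.

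For step (1), since $B_0 \in D((-\Delta)^{\alpha/2})$ is divergence-free and mean-zero on $\T^3$, I set $\Psi_0 := (-\Delta)^{-1}(\nabla \times B_0)$. The vector identity $\nabla \times (\nabla \times \cdot) = \nabla\,\mathrm{div} - \Delta$, combined with $\dive B_0 = 0$, yields $\nabla \times \Psi_0 = B_0$ and $\dive \Psi_0 = 0$. The two-derivative gain of $(-\Delta)^{-1}$ on mean-zero fields places $\Psi_0 \in \dot H^{\alpha+1}$. Uniqueness in the divergence-free, mean-zero class is immediate: any two such potentials would differ by a curl-free, divergence-free, mean-zero field on $\T^3$, which must vanish.

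For step (2), with $\alpha \geq 1$ the energy bound and Sobolev embedding give $u, B \in L^\infty(0,T; L^6)$, hence $u \times B \in L^\infty(0,T; L^3)\hookrightarrow L^\infty(0,T; \dot H^{1-\alpha})$, and Lemma~\ref{lem:ellipticL} then places $\LL^{-1}(u \times B) \in L^\infty(0,T; \dot H^{\alpha+1})$. Consequently
\begin{equation*}
\Psi(t) := \Psi_0 + \int_0^t \LL^{-1}(u\times B)(s)\,ds
\end{equation*}
is a strong solution of~\eqref{eq:topotential} lying in $L^\infty(0,\infty; D((-\Delta)^{(\alpha+1)/2}))$. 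The divergence-free property at $t > 0$ is obtained most cleanly by instead integrating the Leray-projected forcing $\mathbb{P}(u \times B)$: this modifies $\Psi$ only by a gradient (leaving $\nabla \times \Psi$ unchanged) while preserving $\dive \Psi \equiv 0$.

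For step (3), set $\tilde B := \nabla \times \Psi$, so that $\tilde B(0) = \nabla \times \Psi_0 = B_0$. Since $\partial_t$, $\LL$ and $\nabla \times$ are commuting Fourier multipliers, taking the curl of~\eqref{eq:topotential} gives $\partial_t \LL \tilde B = \nabla \times (u \times B)$. On the other hand, the identity $\nabla \times (u \times B) = B \cdot \nabla u - u \cdot \nabla B$ (using $\dive u = \dive B = 0$), combined with the induction equation~\eqref{eq:MHDV2} and the fact that $\mathbb{P}$ is the identity on any curl, yields $\partial_t \LL B = \nabla \times (u \times B)$ as well. Thus $B$ and $\tilde B$ satisfy the same linear equation with the same initial datum $B_0$, and injectivity of $\LL$ on mean-zero fields forces $\tilde B \equiv B$. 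The only mildly nontrivial point — and the reason the hypothesis $\alpha \geq 1$ appears — is propagating the low regularity of $u \times B$ through $\LL^{-1}$ to land inside $D((-\Delta)^{(\alpha+1)/2})$; once that is done, the uniqueness step is immediate.
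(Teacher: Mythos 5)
Your argument is correct and follows essentially the same route as the paper: the identification $\nabla \times \Psi = B$ is obtained exactly as in the paper's proof (take the curl of the equation, compare with the induction equation via $\nabla \times (u \times B) = B\cdot\nabla u - u\cdot\nabla B$, and invoke injectivity of $\LL$ on mean-zero fields), while your steps (1)--(2) simply flesh out what the paper dismisses as ``standard,'' including a sensible handling of the divergence-free issue via the Leray projection of the forcing. One cosmetic remark: the integral formula for $\Psi$ only yields an $L^\infty_{\mathrm{loc}}$-in-time bound a priori, so the claimed uniform bound in $L^\infty(0,\infty; D((-\Delta)^{(\alpha+1)/2}))$ should be read off \emph{after} step (3), from $\Psi = (-\Delta)^{-1}\nabla \times B$ together with the uniform energy bound on $\|B\|_{\alpha}$.
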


\begin{proof}[Proof of Lemma~\ref{lem:magneticpot}]
The existence and regularity parts are standard. To prove relation \eqref{eq:potential}, we take the curl of the evolution equation in~\eqref{eq:topotential}, in order to obtain
\begin{equation}
\p_t \LL (\nabla \times \Psi) = \nabla \times (u \times B) = \p_t \LL B.
\end{equation}
Integrating in time and using the initial conditions, this gives $\LL(\nabla \times \Psi) =  \LL B$ at all times $t \geq 0$. We conclude by the fact that the kernel of $\LL$ in $D((-\Delta)^{\alpha/2})$ is trivial.
\end{proof}

\section{Proof of Proposition~\ref{prop:example1}}\label{app:growth}

In this section, we provide a particular solution to the $3D$ MHD equation which, in the infinite time limit, creates discontinuities of $B$. The example is classical, and it is essentially obtained by decoupling the momentum and the induction equation, imposing that the $B$ field is always vertical.

This example should be contrasted with the result obtained in Section~\ref{sec:relax}, and it shows that, in general, for the full MHD system may not relax to a regular MHS equilibrium in the infinite time limit. This is different from the situation in the Voigt--MHD system.

\begin{proof}[Proof of Proposition~\ref{prop:example1}]
Let us consider the three dimensional torus obtained from the box $[-1,1] \times [-1,1] \times [-1,1]$ with opposite sides identified.

Suppose that $u$ has the following form: $u = (u_1(x,y), u_2(x,y),0)$, and that $B$ has the following form: $B= (0,0,B_3(t,x,y))$.

Let us consider the following stream function for $u$:
$$
\Psi := \sin(2\pi x) \sin (2 \pi y),
$$
and we let $u_1 := \partial_y \Psi$, $u_2 = -\partial_x\Psi$.  Under these conditions, $u$ satisfies the steady Euler equations:
$$
u \cdot \nabla u + \nabla p =0, \qquad \dive u = 0.
$$
Let us moreover evolve $B_3$ according to the following transport equation:
\begin{equation}\label{eq:beqn}
\partial_t B_3(t,x,y) + u \cdot \nabla B_3(t,x,y) = 0. 
\end{equation}
With these choices, we have that the pair $(u,B)$ satisfies the ideal 3D MHD equations:
\begin{equation}
\begin{aligned}
&\p_t u + u \cdot \nabla u+ \nabla p =   B \cdot \nabla B,\\
&\p_t B + u \cdot \nabla  B - B \cdot \nabla u=0,\\
&\dive u = 0, \qquad \dive B = 0.
\end{aligned}
\end{equation}
Since $u$ is constant in time, we only need to specify initial data for $B$. We set $B_0 (x,y) = \chi(y)$, where $\chi$ is a smooth and periodic function, $\chi: [-1,1] \to \R$, with the property that $\chi(y) = y$ for $|y| \leq 1/2$.
We show that, locally around the origin, the gradient of $B_3$ grows exponentially in time.
 
Restricting equation~\eqref{eq:beqn} to the $y$-axis, we have
\begin{equation}\label{eq:transportony}
\partial_t B_3 - \sin (y) \partial_y B_3 = 0.
\end{equation}
Let us define a function $y(t,a)$ by the ODE ($a$ is the Lagrangian label):
$y'(t,a) = - \sin(y(t,a))$, with $y(0) = a$. 
Then, it can be easily checked that, for all positive $t$ and for all $a$ such that $|a| \leq 1/2$ 
\begin{equation}\label{eq:todiff}
B_3(t, 0, y(t,a), z) = B_0(0,a,z).
\end{equation} Note that have the following relation satisfied by $y(t,a)$:
$$
\tan\Big({\frac{y(t,a)}{2}}\Big) = \tan\Big(\frac a 2\Big) e^{-t},
$$
Differentiating relation~\eqref{eq:todiff} and calculating the result at $a = 0$, we have that
$$
|\p_y B_3 (t,0,0,z)| = C'e^t,
$$
for a positive constant $C'$, thereby proving the claim.
\end{proof}

\subsection*{Acknowledgements}

We thank Theo Drivas and Huy Nguyen for several insightful comments. P.C.'s research was partially supported by
NSF grant DMS-2106528 and by Simons Foundation grant 601960.

\bibliography{voigt.bib} 
\bibliographystyle{plain}

\end{document}